\pgfplotsset{compat=1.13}
\tikzstyle{block} = [draw,rectangle,thick,minimum height=2em,minimum width=2em]
\tikzstyle{sum} = [draw,circle,inner sep=0mm,minimum size=2mm]
\tikzstyle{connector} = [->,thick]
\tikzstyle{line} = [thick]
\tikzstyle{branch} = [circle,inner sep=0pt,minimum size=1mm,fill=black,draw=black]
\tikzstyle{guide} = []
\tikzstyle{snakeline} = [connector, decorate, decoration={pre length=0.2cm,
\tikzset{>=latex}
\newcommand{\mbR}{\mathbb{R}}
\newcommand{\mbZ}{\mathbb{Z}}
\newcommand{\mbN}{\mathbb{N}}
\newcommand{\E}{\mathbf{E}}
\renewcommand{\P}{\mathbf{P}}
\newcommand{\1}{1\!\!\!\;{\rm I}}
\renewcommand {\epsilon}{\varepsilon}
\theoremstyle{plain}
\newtheorem{thm}{Theorem}[section]
\newtheorem{lem}[thm]{Lemma}
\newtheorem{prp}[thm]{Proposition}
\newtheorem{cor}[thm]{Corollary}
\theoremstyle{definition}
\newtheorem{rem}[thm]{Remark}
\newtheorem{exa}[thm]{Example}
\DeclareMathSymbol{\ophi}{\mathalpha}{letters}{"1E}
\newcommand{\e}{\varepsilon}
\newcommand{\ve}{\varepsilon}
\renewcommand{\phi}{\varphi}
\newcommand{\be}{\begin{equation}}
\newcommand{\bel}{\begin{equation}\label}
\newcommand{\ee}{\end{equation}}
\newcommand{\ben}{\begin{equation*}}
\newcommand{\een}{\end{equation*}}
\newcommand{\ba}{\begin{equation}\begin{aligned}}
\newcommand{\ea}{\end{aligned}\end{equation}}
\DeclareMathOperator{\sgn}{sgn}
\DeclareMathOperator{\Law}{Law}
\renewcommand{\i}{\mathrm{i}}
\newcommand{\ex}{\mathrm{e}}
\newcommand{\di}{\mathrm{d}}
\newcommand{\cH}{\mathcal{H}}
\newcommand{\cX}{\mathcal{X}}
\newcommand{\cY}{\mathcal{Y}}
\newcommand{\cZ}{\mathcal{Z}}
\newcommand{\rF}{\mathscr{F}}
\newcommand{\bF}{\mathbb{F}}
\newcommand{\bR}{\mathbb{R}}
\newfont{\cyrfnt}{wncyr10}
\def\J3{\cyrfnt{\rm \u{\cyrfnt I}}}
\def\j3{\cyrfnt{\rm \u{\cyrfnt i}}}
\numberwithin{equation}{section}
\begin{document}
 
\title{Limit behaviour of random walks on $\mbZ^m$ with two-sided membrane}

\date{\today} 
 
\author{Victor  Bogdanskii\footnote{National Technical University of Ukraine 
``Igor Sikorsky Kyiv Polytechnic Institute'',
ave.\ Pobedy 37, Kiev 03056, Ukraine;  vbogdanskii@ukr.net},\ \
 Ilya Pavlyukevich\footnote{Institute for Mathematics, Friedrich Schiller University Jena, Ernst--Abbe--Platz 2,
07743 Jena, Germany; ilya.pavlyukevich@uni-jena.de}\ \  and 
Andrey Pilipenko\footnote{Institute of Mathematics, National Academy of Sciences of Ukraine, Tereshchenkivska Str.\ 3, 01601, Kiev, 
Ukraine} \footnote{National Technical University of Ukraine 
``Igor Sikorsky Kyiv Polytechnic Institute'',
ave.\ Pobedy 37, Kiev 03056, Ukraine; pilipenko.ay@gmail.com}
}

\maketitle
 
\begin{abstract}
 
We study Markov chains on $\mathbb Z^m$, $m\geq 2$, 
that behave like a standard symmetric random walk outside of the hyperplane (membrane) $H=\{0\}\times \mathbb Z^{m-1}$.  
The transition probabilities on the membrane $H$ are periodic and also depend on the incoming direction to $H$, what makes the membrane
$H$ two-sided. Moreover, sliding along the membrane is allowed. We show that the natural scaling limit of such Markov chains
is a $m$-dimensional diffusion whose first coordinate is a skew Brownian motion
and the other $m-1$ coordinates is a Brownian motion with a singular drift controlled by the local time of the first coordinate at $0$. 
In the proof we utilize a 
 martingale characterization of the Walsh Brownian motion and determine the effective permeability and slide direction. 
Eventually, a similar convergence theorem is established for the one-sided membrane without slides and random iid transition probabilities.
\end{abstract}

\noindent
\textbf{Keywords:} Skew Brownian motion; Walsh's Brownian motion; perturbed random walk; 
two-sided membrane; weak convergence; martingale characterization.

\smallskip
\noindent
\textbf{2020 Mathematics Subject Classification:}
60F17$^*$, 
60G42, 
60G50, 
60H10, 
60J10, 
60J55, 
60K37. 

\section{Introduction}

A multidimensional Brownian motion is a fundamental stochastic process that describes an idealized mathematical model of
a free physical diffusion in a homogeneous medium. 
Having in mind the observations of pollen made my Brown or a later kinetic theory of gases, we can interpret a diffusion as a 
collective motion of independent random walkers whose distribution density in space obeys the isotropic Gaussian distribution.

However in real physical or biological systems, the space is often separated into compartments by \emph{membranes} 
that impede or facilitate the passage of the walker and create an anisotropy in the walkers' collective motion.
From the physical point of view, a membrane is a thin slice of a material whose diffusivity 
is different from the diffusivity of the environment.
Diffusions through membranes are observed in biological tissues where they 
control the transport of ions, water molecules and gases, or in porous and composite materials. 

A rigorous mathematical justification of the interpretation of a diffusion as a limit of scaled random walks is given by the 
Functional Central Limit Theorem (FCLT, the Donsker--Prokhorov invariance principle). 
For $m\geq 1$, let $\{\ex_1,\dots,\ex_m\}$ be a standard basis in $\bR^m$. Consider a symmetric 
random walk $ Z=(Z(n))_{n\geq 0}$ on $\mbZ^m$ defined by identical one-step transition probabilities
\ba
\P\Big( Z(n+1)- Z(n)=\pm \ex_k\Big| Z(0),\dots,  Z(n)\Big)= \frac{1}{2m},\quad k=1,\dots m,\quad n\geq 0.
\ea
Then the FCLT yields the weak convergence in the uniform topology
\ba
\Big(\frac{ Z([nt])}{\sqrt{n}}\Big)_{t\geq 0}\Rightarrow \Big(\frac{1}{\sqrt m} W(t)\Big)_{t\geq 0},
\ea
where $W$ is a standard $m$-dimensional Brownian motion.

In this paper we consider a novel class of scaling limits of random walks 
in the presence of extended spatially non-homogeneous one- or two-sided barrier (membrane, interface). More precisely, in the $m$-dimensional
space we consider an semi-permeable hyperplane that separates the space into two half-space compartments. 
The random walker will perform the symmetric random walk outside of the membrane, however the probability of the passage through the membrane
will depend on the hitting position of the membrane by the walker and on its incoming direction. Thus the probability to penetrate the 
membrane will depend of the fact whether the walker has reached it from the ``right'' of from the ``left''.
Furthermore we will assume that the membrane has a spatially regular structure according to one of the following setting. First, 
we assume that the penetration probabilities are periodic in space, so that the membrane
reminds of a two-sided fabric. Alternatively, the membrane can imitate a ``random'' perforated surface so that the
penetration probabilities are iid random variables. We will show the scaled limit of such Markov chains
is a skew Brownian motion
in the direction perpendicular to the membrane.
The other coordinates converge to a standard Brownian motion, maybe, with a singular drift given by the local time of the first
coordinate at the origin.

The proofs of these results are purely probabilistic and employ methods of homogenization and dynamics of singular differential equations.
Our results give a path-wise picture of the diffusion through a two-side semi-permeable interface. In the physical language this corresponds to the 
Langevin--Smoluchowsky approach to diffusions. It should be emphasized that physical papers (see, e.g., Novikov et al.~\cite{novikov2011random}, Grebenkov et al.~\cite{grebenkov2014exploringI}, Moutal and Grebenkov \cite{moutal2019diffusion} and references therein) devoted to similar problems use analytical 
methods, mainly the analysis of the Fokker--Planck equation. 

One-dimensional locally perturbed random walks were considered from different points of view in 
Harrison and Shepp \cite{HShepp-81}, Minlos and Zhizhina \cite{minlos1997limit}, Pilipenko and Pryhod'ko \cite{pilipenko2012limit},
Pilipenko and Prikhod'ko \cite{pilipenko2015signular}, Pilipenko and Sakhanenko
\cite{pilipenko2015impurity}, Ngo and Paign\'e \cite{NgoPeigne19}. In this paper, we use the multidimensional martingale characterization approach 
previously considered in  Iksanov and Pilipenko\cite{IksanovPilipenko2016} in dimension one.

It should be noted that  if transition probabilities of a multidimensional random walk are 
perturbed on a finite set or on a hyperplane of co-dimension 2, then under some natural assumptions 
its scaling limit is a Brownian motion, see 
Sz\'asz and Telcs
\cite{szasz1981random},
Yarotskii
\cite{yarotskii2001central}, and
Paulin and Sz\'asz
\cite{paulin2010locally}.

We also refer the reader's attention to the following related mathematical works: 
the monograph by Portenko \cite{portenko1990generalized} and the research
papers by
Lejay
\cite{lejay2016snapping},
Mandrekar and Pilipenko
\cite{mandrekar2016brownian},
Aryasova and Pilipenko
\cite{aryasova2009onbrownian},
Iksanov et al.\
\cite{iksanov2021functional},
 Pilipenko and Khomenko
\cite{pilipenko2017zero}, 
Pilipenko
\cite{pilipenko2017functional}, and
Pilipenko and Prykhodko
\cite{pilipenko2014jump}.

\medskip

\noindent
\textbf{Notation.} The weak convergence in the Skorokhod space $D([0,\infty),\bR^m;J_1)$ is denoted by $\Rightarrow$. It should be noted, however, 
that all limit processes in this paper are continuous. The convergence in distribution of random variables is denoted by $\stackrel{\di}{\to}$.

\medskip

\noindent
\textbf{Acknowledgements.} I.P.\ and A.P.\ acknowledge  support
 by the DFG project PA 2123/7-1.
 A.P.\ thanks the Institute of Mathematics
of the FSU Jena for hospitality. 
V.B.\ and A.P.\ acknowledge support by the National Research Foundation of Ukraine
 (project 2020.02/0014 ``Asymptotic regimes of perturbed random walks: on the edge of modern and
 classical probability'').

\section{Two-sided periodic membrane: the model and the main result\label{s:twosided}}
 
Let $m\geq 2$ and let $\{\ex_1,\dots,\ex_m\}$ be a standard basis in $\mbR^m$. 
Consider a Markov chain $Z=\{Z(n)\}_{n\geq 0}$ on $\mbZ^m$ that behaves as 
a simple random walk outside of the hyperplane $H:=\{0\}\times \mbZ^{m-1}$, i.e.,\
for all $k=1,\dots,m$
\be
\label{eq:transition}
\P\Big(Z({n+1})=z\pm \ex_k\Big| Z(n)=z\Big)=\frac{1}{2m}, \ z\notin H.
\ee
For each $n\geq 1$, we denote the first coordinate of 
the process $Z$ by $X$, and the other $(m-1)$  coordinates by $Y$ so that
$Z=(X, Y)$.

We will interpret $H$ as a semipermeable two-sided membrane that may let a particle pass from one half-space 
to another with certain probabilities that can depend on the crossing direction. Moreover, the particle can 
``slide''
along the membrane.

The membrane has to be homogeneous, i.e.,\ the transition 
probabilities are periodic in space.

Notice that if the membrane is two-sided, then $Z$ is not a Markov chain, generally. Indeed, 
its position upon leaving the membrane is determined by both current the location on the membrane and on the particle's incoming direction. 
Hence, in order to introduce a Markov structure 
we have to enlarge the state space by splitting the membrane $H$ into two parts $\cH^-$ and $\cH^+$ corresponding to its ``right'' and ``left'' sides.

To formalize the setting, we consider the Markov chain $\mathcal Z=(\mathcal Z(n))_{n\geq 0}$
on the state space
\ba
\{\dots, -3, -2,-1,-0, +0, 1,2,3,\dots\}\times \mbZ^{m-1}.
\ea
We also write $\cZ=(\cX,Y)$, so that the process $Y$ coincides with the process $Y$ of the original random walk $Z$ whereas the process 
$\cX$ is defined on the enlarged space  $\{\dots, -3, -2,-1,-0, +0, 1,2,3,\dots\}$.

Denote the ``right'' and ``left'' hand side of the membrane by
\ba
\cH^-:=\{- 0\}\times \mbZ^{m-1},\quad \cH^+:=\{+0\}\times \mbZ^{m-1},
\ea
and we set $\mathcal{H}:=\cH^-\cup \cH^+$.

We assume that transition probabilities of $\mathcal Z$
outside of $\cH$ satisfy
\eqref{eq:transition}, namely
\be
\P\Big(\mathcal Z({n+1})=z\pm \ex_k\, \Big| \, \mathcal Z(n)=z\Big)=\frac{1}{2m}, \ z\notin \cH,
\ee
where we agree that for each $y\in\mbZ^{m-1}$
\ba
(1,y)-\ex_1=(+0,y)\quad &\text{and}\quad (-1,y)+\ex_1=(-0,y),\\
(+0,y)+ \ex_1=(1,y)\quad &\text{and}\quad (-0,y)+ \ex_1=(1,y),\\
(+0,y)- \ex_1=(- 1,y)\quad &\text{and}\quad (-0,y)- \ex_1=(- 1,y).
\ea
Suppose also that the perturbed random walk does not stay on the membrane, i.e.,\
\ba
\P\Big(\cZ({n+1})\in \{\pm 1\}\times \mbZ^{m-1} \, \Big| \, \cZ(n)=z\Big)=1, \quad z\in \cH.
\ea
However it is allowed that upon hitting the membrane (from the left or from the right) the particle can ``slide''
along the membrane, so that its $y$-coordinate changes.
We assume that transition probabilities from $\cH$ have periodic structure.

\noindent 
\textbf{Notation.} 
For $k_2,\dots,k_{m}\geq 1$ fixed, let $U$ denote the box in $\mbZ^{m-1}$ defined by
\ba
U:=[0,k_2-1]\times \cdots\times [0,k_m-1].
\ea
For $y\in \mbZ^{m-1}$, $y=(y_2,\dots,y_{m})$, and $j\in U$, $j=(j_2,\dots ,j_{m})$, we say that
\ba
y\equiv  j\quad \text{if}\quad y_i \ (\textrm{mod}\ k_i) = j_i ,\  i=2,\dots,m.
\ea  
For each $j\in U$ we set
\ba
H_j^\pm:= \{ (\pm 0,y)\in H^\pm \colon   y\equiv   j\}.
\ea
Without loss of generality let us assume that the random walk $\cZ$ starts on the hyperplane $\cH$, i.e.,\ $\cX(0)=\pm 0$.

The following are our key assumptions concerning the transition probabilities of the random walk in the membrane. 

\medskip

\noindent
\textbf{A}$_\text{periodic}$.  Periodicity of the transition probabilities. We assume that there exist $k_2,\dots,k_m\geq 1$ such that for all 
$l_2,\dots,l_m\in\mbZ$, for all $z_0\in \cH$, for all $z_1\in \{- 1,+1\}\times \mbZ^{m-1}$
\ba
\label{eq:periodic_membrane}
\P\Big(\cZ({n+1})=z_1\, &\Big| \, \cZ(n)=z_0\Big)\\
&=\P\Big(\cZ(n+1)=z_1+ k_2 l_2 \ex_2+ \cdots +k_m l_m \ex_m \, \Big| \, \cZ(n)=z_0+ k_2 l_2 \ex_2+ \cdots +k_m l_m \ex_m\Big).
\ea

\noindent 
\textbf{A}$_\gamma$. To describe the transitions through the membrane, we denote by $0=\tau_0<\tau_1<\cdots$ 
the successive arrivals of $\cZ$ to $\cH$ or, equivalently, of 
$\cX$ to $\{- 0,+0\}$.

On the finite state space $\{-0, +0\}\times U$ 
we consider an auxiliary embedded process $\hat \cY=(\hat \cY(n))_{n\geq 0}$ as follows.
We set
\ba
\label{e:hatY}
\hat \cY(n)=
\begin{cases} 
(-0,j), \ \text{ if } Y(\tau_n)\equiv j\in U\text{ and }    X(\tau_n+1)=-1,\\
(+0, j), \ \text{ if } Y(\tau_n)\equiv j\in U\text{ and }  X(\tau_n+1)=1.
\end{cases}
\ea
It follows from the strong Markov property of $\cZ$ that $\hat \cY$ is a Markov chain. 
It is easy to see that all the states of the set  $\{-0, +0\}\times U $ are connected for $\hat \cY$. 
Hence there is a unique stationary distribution 
\ba
\pi=\{\pi_{\{- 0\}\times j}, \pi_{\{+ 0\}\times j}\}_{j\in U}
\ea
of $\hat\cY$ on $\{-0, +0\}\times U $.
By the strong law of large numbers for Markov chains, for each $j\in U$
\be
\label{eq:statPi}
\lim_{n\to\infty} \frac{1}{n}\sum_{k=0}^n\1(\hat \cY(k)=\{\pm0\}\times j) = \pi_{\{\pm 0\}\times j} \ \ \text{a.s.}
\ee
With the help of the stationary distribution $\pi$ we introduce the \emph{effective permeability}
\ba
\label{e:gamma}
\gamma&:=\sum_{j\in U}(\pi_{\{+0\}\times j}- \pi_{\{-0\}\times j})\in[-1,1].
\ea

\noindent 
\textbf{A}$_c$. Finally, we describe the ``slides'' along the membrane. We denote
\ba
\label{e:c}
\alpha_{\pm, j}&:=\E\Big[Y(1)-Y(0)\,\Big|\, \cX(0)=\pm 0, Y(0)=j\Big]\in\bR^{m-1},\quad j\in U,\\
\ea
the mean slide sizes along the ``right'' of the ``left'' side of the membrane, 
and assume that they are finite. 
Introduce the \emph{effective slide}   as
\ba
c&:=\E_\pi\Big[Y(1)-Y(0)\Big]
=\sum_{j\in U}\Big(\pi_{\{+0\}\times j}\cdot \alpha_{+,j}+ \pi_{\{-0\}\times j}\cdot \alpha_{-,j}\Big)\in\bR^{m-1}.
\ea

Now we are ready to formulate the main result of the paper. 
Let $W_X$ and $W_Y$ be independent standard Brownian motions in $\bR$ and $\bR^{m-1}$ respectively.

For $\gamma\in[-1,1]$ defined in \eqref{e:gamma}, we consider the skew 
Brownian motion $X^{\gamma}$ that is a unique strong solution of the SDE 
\ba
\label{eq:Skew}
X^{\gamma}(t) =\gamma L(t) + W_X(t) , \quad t\geq 0.
\ea
where $L$ is the symmetric two-sided local time of $X^\gamma$ at zero, see Harrison and Shepp \cite{HShepp-81}. Other characterizations
and properties of the skew Brownian motion can be found in the review by Lejay \cite{Lejay-06}.

Furthermore, let
\ba
\label{eq:Y}
Y^{c}(t) :=c L(t) + W_Y(t) , \quad t\geq 0,
\ea
i.e.,\ the process $Y^c$ is a $(m-1)$-dimensional Brownian motion that slides in the direction $c$ in the time instants when $X^\gamma$ touches zero.
Note that \eqref{eq:Y} is not a stochastic differential equation because the process $L$ is already determined in \eqref{eq:Skew}.

For the original process $Z=(X,Y)$, denote the rescaled continuous time processes
\ba
\label{e:XY}
X_n(t):=\frac{X([nt])}{\sqrt{n}}, \quad  Y_n(t):=\frac{Y([nt])}{\sqrt{n}},\quad t\geq 0.
\ea

\begin{thm}
\label{thm:periodic_membrane}
Let $Z=(X,Y)$ be a perturbed random walk satisfying the preceding assumptions. Then for any initial value 
$Z(0)\in\mbZ^m$,
the 
weak convergence 
holds true:
\ba
(X_n,Y_n)\Rightarrow \frac{1}{\sqrt m} ( X^{\gamma}, Y^c ),\quad n\to\infty,
\ea
where the processes $X^\gamma$ and $Y^c$ are defined in \eqref{eq:Skew} and \eqref{eq:Y}, and $\gamma$ and $c$ are defined in 
\eqref{e:gamma} and \eqref{e:c} respectively.
\end{thm}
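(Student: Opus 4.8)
\textbf{Overall strategy.} The plan is to establish tightness of $(X_n, Y_n)$ first, and then identify every subsequential limit as $\frac{1}{\sqrt m}(X^\gamma, Y^c)$ via a martingale characterization. Because the limiting process is a Walsh-type / skew Brownian motion with a drift on $Y$ driven by the local time $L$ of the first coordinate, the natural tool is the martingale problem: I would show that for a suitable class of test functions $f$, the process $f(X_n(t), Y_n(t)) - \int_0^t (\cL f)(X_n(s),Y_n(s))\,ds$ is asymptotically a martingale, where $\cL$ is the generator away from the membrane, and that the ``boundary part'' of the dynamics — the excess probability mass pushed to one side and the mean slide — converges to the correct skew parameter $\gamma$ and slide vector $c$. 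Concretely I would verify: (i) away from $H$, $X_n$ and $Y_n$ behave like rescaled simple random walk, so $X_n^2(t) - \frac{t}{m} - 2\gamma \ell_n(t)$ and $Y_n$ (componentwise) squared minus $\frac{t}{m}$, minus the accumulated slide, are approximate martingales, where $\ell_n$ is a discrete analogue of local time at $0$; (ii) $|X_n|$ is, up to the membrane corrections, a reflected random walk, whose limit is reflected Brownian motion with a well-defined local time $L$; (iii) the signs of excursions of $X_n$ away from $H$ are, in the limit, iid $\pm$ with the bias determined by $\gamma$, by the homogenization/ergodic statement \eqref{eq:statPi} for the embedded chain $\hat\cY$.

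\textbf{Key steps in order.} First I would reduce to the enlarged Markov chain $\cZ = (\cX, Y)$, noting that $Z$ and $\cZ$ induce the same law on the $Y$-coordinate and on $|X|$, and that the sign of $X$ is recovered from which side ($\cH^\pm$) the walk sits on. Second, I would prove tightness: $|X_n|$ is dominated by a process that increments like $|{\rm SRW}|$ off the membrane and makes bounded jumps on it, giving tightness by the usual Aldous/moment criterion; the slides $\alpha_{\pm,j}$ are bounded in mean (assumption $\textbf{A}_c$), so $Y_n$ is tight as well — here I would need a moment bound on the number of visits to $\cH$ up to time $nt$, of order $\sqrt n$, which follows from comparison with local time of SRW. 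Third — the homogenization core — I would analyze excursions: decompose time $[0,nt]$ into the successive visits $\tau_k$ to $\cH$ and the excursions between them. The embedded chain $\hat\cY$ on $\{-0,+0\}\times U$ is ergodic with stationary law $\pi$; by \eqref{eq:statPi} the empirical fraction of excursions leaving to the right ($\cX = +1$) versus left converges a.s.\ to $\sum_j \pi_{\{+0\}\times j}$ versus $\sum_j \pi_{\{-0\}\times j}$, and likewise the empirical mean of the slide increments $Y(\tau_k+1)-Y(\tau_k)$ converges to $c$. Summing slides over $\sim c_0 \sqrt n \cdot L(t)$ visits (after rescaling by $1/\sqrt n$) produces exactly the term $cL(t)$ in \eqref{eq:Y}, and the excursion-sign bias produces the skew coefficient $\gamma$ in \eqref{eq:Skew}. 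Fourth, I would assemble these into the statement that along any convergent subsequence the limit $(X^*, Y^*)$ satisfies: $X^*$ is a skew BM with parameter $\gamma$ (run at speed $1/m$) with local time $L$, and $Y^* = cL + \frac{1}{\sqrt m}W_Y$ with $W_Y$ a BM independent of $W_X$ — independence coming from the fact that, off the membrane, $X$ and $Y$ increments are (conditionally) uncorrelated and the membrane is visited on a time set of Lebesgue measure zero in the limit. By the uniqueness of the strong solution to \eqref{eq:Skew} (Harrison–Shepp) this pins down the limit uniquely, so the whole sequence converges.

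\textbf{Main obstacle.} The delicate point is the simultaneous homogenization of two coupled effects on a set of vanishing Lebesgue measure: the excursion signs (yielding $\gamma$) and the slide displacements (yielding $c$), both indexed by the same sequence of membrane visits whose cardinality is itself a random, diverging ($\sim \sqrt n$) functional converging to a constant multiple of the local time $L(t)$ of the limit process. Making this rigorous requires (a) showing the discrete local-time clock $\ell_n(t) := \#\{k \le nt : \cZ(k)\in\cH\}/\sqrt n$ converges jointly with $X_n$ to (a multiple of) $L(t)$ — a known type of result for perturbed 1D walks, but one must control it uniformly enough to plug into the ergodic averages — and (b) a careful interchange: averaging the bounded functionals (excursion sign, slide increment) of the ergodic chain $\hat\cY$ over a random number of steps that is coupled to the very process being constructed. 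I expect to handle this by a skeleton/coupling argument: freeze the number of visits in a small time window, apply the LLN \eqref{eq:statPi} on that window, and then sum, using the continuity of $t \mapsto L(t)$ to pass to the limit; an alternative is to phrase everything through the martingale characterization of Walsh Brownian motion referenced in the abstract, which absorbs the excursion-sign bookkeeping into a single martingale identity. A secondary technical nuisance is that $Z$ itself is not Markov, so all probabilistic identities must be carried out on $\cZ$ and then pushed forward; and one must check that the (possibly unbounded) slide distributions, being only assumed to have finite mean, still give tightness of $Y_n$ — this needs a truncation argument together with the fact that large slides at a single visit, divided by $\sqrt n$, are negligible in probability.
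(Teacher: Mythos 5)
Your overall strategy (tightness, excursion decomposition at membrane visits, the SLLN \eqref{eq:statPi} for the embedded chain $\hat\cY$, and identification of subsequential limits by a martingale argument) matches the paper's in spirit, and you even mention the Walsh Brownian motion martingale characterization as an ``alternative'' — that alternative is in fact exactly what the paper does. Where you differ is in the primary plan: you set out to verify that $f(X_n,Y_n)-\int_0^t(\cL f)\,\di s$ is an approximate martingale for test functions $f$, i.e.\ a generator-based martingale problem on the ambient two-block state space. That route is hard to execute here because the limit has a singular local-time drift, so there is no classical generator and the ``boundary correction'' to $\cL$ is exactly the quantity one is trying to identify; moreover your candidate martingale $X_n^2(t)-t/m-2\gamma\ell_n(t)$ has the wrong form (for skew Brownian motion $(X^\gamma)^2-t$ is already a martingale, since $\int X^\gamma\,\di L=0$, and the discrete correction at the membrane is not $2\gamma$).

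The paper instead makes a decisive structural move that you do not articulate: it lifts $\cX$ to the $2|U|$-dimensional nonnegative process $(X^{j,+},X^{j,-})_{j\in U}$ of \eqref{eq:X_j_notation}, one coordinate per (side, period-class) pair, exactly one coordinate nonzero at a time. Each coordinate then admits the elementary Doob decomposition $X^{j,\pm}=M^{j,\pm}+L^{j,\pm}$ into a martingale plus a nondecreasing ``departure counter,'' and the whole vector is shown (Proposition \ref{p:WBM}) to converge to a Walsh Brownian motion with weights $\pi_{\{\pm0\}\times j}$ by checking the explicit conditions of Theorem \ref{Walsh}. Corollary \ref{c:SBM} then projects this WBM to the skew BM with parameter $\gamma$, and the slide drift $cL$ comes from the same bookkeeping (Proposition \ref{p:Y}). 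This lift is what makes the ``delicate coupling'' you worry about disappear: the ratio $\nu_n^{j,\pm}(t)/\sum_k\nu_n^{k,\pm}(t)=L^{j,\pm}([nt])/L([nt])$ converges a.s.\ to $\pi^\pm_j$ directly by \eqref{eq:statPi} applied at the (deterministic in distribution) visit count $L([nt])\to\infty$, so no skeleton/freezing argument is needed — the joint convergence of $S([n\cdot])/\sqrt n$ and $L([n\cdot])/\sqrt n$ is already furnished by Lemma \ref{lem:refl}, and the $\pi^\pm_j$-split is then automatic. Your concern about unbounded slides (only finite-mean $\alpha_{\pm,j}$ is assumed) is a fair technical remark; the paper leaves the verification of Proposition \ref{p:Y} to the reader as ``analogous,'' but a truncation/uniform-integrability argument of the kind you sketch would indeed be needed to make the slide term tight when the slide laws have heavy tails within the finite-mean class.

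In short: your ingredients are right and you correctly spot the role of ergodicity of $\hat\cY$, but your default route (a martingale problem with generator $\cL$) is not the one the paper uses and would be substantially harder to close; the paper's specific device — lifting to the $2|U|$-ray Walsh process and invoking Theorem \ref{Walsh} plus Corollary \ref{c:SBM} — is the key idea you gesture at only as a fallback.
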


The crux of the Theorem is transparent. Away from the membrane $H$, the limiting process $(X^\gamma,Y^c)$ coincides with the 
standard $m$-dimensional Brownian motion $(W_X,W_Y)/\sqrt{m}$. The perturbation of the transition probabilities on the two-sided membrane 
results in the appearance of a singular drift in the direction $x$ perpendicular to the membrane. 
Hence the $x$-coordinate of the limiting process becomes a skew Brownian motion with the effective permeability parameter $\gamma\in[-1,1]$.
The $y$-coordinates also become a singular drift in the effective sliding direction $c\in\bR^{m-1}$. This drift equals 
 zero as long as 
the limiting process stays away from the membrane. However upon hitting the membrane, the limiting process performs a singular ``sliding'' 
in the direction $c$ controlled by the local time at zero of the $x$-coordinate. 
Note that in the limit, the two-sided membrane structure disappears, and 
the limiting process $( X^{\gamma}, Y^c )$ is Markov in $\bR^m$.

We illustrate Theorem \ref{thm:periodic_membrane} by an example.

\begin{exa}\label{exa:1_periodic}
Consider a perturbed random walk $Z=(X,Y)$ on the plane $\mbZ^2$, $m=2$, 
separated by a two-sided membrane $\cH$. Assume that the membrane is 
$2$-periodic, i.e.,\ $k_1=2$ and the set $U=\{0,1\}$. 

Assume that upon hitting the membrane from the ``right'' the random walk
  gets reflected in the even points and surely penetrates the membrane in the odd points.
 The penetration probabilities from the ``left''
  are given at Fig.~\ref{f:fig} a).
  
 Let
\ba
\tau=\inf\{k\geq 0\colon \cZ(n)=(\cX(n),Y(n))\in \cH \}
\ea
be the first hitting time of the Markov chain $\cZ$ of the two-sided membrane $\cH$.
Since away of the membrane $\cH$, the increments of $\cZ$ coincide with the
 increments of a translation invariant 
two-dimensional symmetric random walk $Z$, we can easily 
calculate the probabilities of hitting the membrane in an even or an odd point:
\ba
\label{e:alpha}
\alpha&:=\P_{(1,0)}(Y(\tau)\equiv 0)=\P_{(1,1)}(Y(\tau)\equiv 1)=\P_{(-1,0)}(Y(\tau)\equiv 0)=\P_{(-1,1)}(Y(\tau)\equiv 1)=2-\sqrt 2,\\
1-\alpha&:=\P_{(1,0)}(Y(\tau)\equiv 1)=\P_{(1,1)}(Y(\tau)\equiv 0)=\P_{(-1,0)}(Y(\tau)\equiv 1)=\P_{(-1,1)}(Y(\tau)\equiv 0)=\sqrt 2-1.
\ea
With the help of \eqref{e:alpha} we calculate the transition probabilities of the embedded Markov chain $\hat \cY$ on $\{-0,+0\}\times\{0,1\}$:
\ba
\mathbb P=
    \begin{array}{c|cccc}
            & (-0,0) & (-0,1)& (+0,0) &(+0,1)\\\hline
 \vphantom{1^{|}}    (-0,0) & (1-p)(1-\alpha)  & (1-p)\alpha & p\alpha & p(1-\alpha)\\
                     (-0,1) &  p\alpha& p(1-\alpha)& (1-p)(1-\alpha) & (1-p)\alpha \\ 
                     (+0,0) & 0 & 0 & \alpha& 1-\alpha\\
                     (+0,1) & 1-\alpha & \alpha &0 &0 
\end{array}   
\ea
Solving the forward Kolmogorov equation $(\mathbb P^T -\text{Id}) \pi=0$ we obtain the stationary law $\pi$ of $\hat \cY$:
\ba
\pi_{(-0,0)} &=  \frac{(1-\alpha) (1-\alpha +(2 \alpha -1) p)}{(1-\alpha) (2 \alpha +1)+(2 \alpha -1) p^2}  ,\quad 
\pi_{(-0,1)}=  \frac{(1-\alpha) \alpha }{(1-\alpha) (2 \alpha +1)+(2 \alpha -1) p^2},\quad \\
\pi_{(+0,0)}&= \frac{\alpha  (1-\alpha +(2 \alpha -1) p^2)}{(1-\alpha) (2 \alpha +1)+(2 \alpha -1) p^2} ,\quad 
\pi_{(+0,1)}=  \frac{(1-\alpha) (\alpha -(2 \alpha -1) (1-p) p)}{(1-\alpha) (2 \alpha +1)+(2 \alpha -1) p^2} ,
\ea
and calculate the effective permeability and the effective slide according to \eqref{e:gamma} and \eqref{e:c}:
\ba
\label{e:gammac}
\gamma&=\frac{(2 \alpha -1) (\alpha  (2 p-1)+(p-1)^2 )}{(1-\alpha) (2 \alpha +1)+(2 \alpha -1) p^2} ,\\
c&=-\frac{(1-\alpha) ((2 \alpha -1) p+1)} {(1-\alpha) (2 \alpha +1)+(2 \alpha -1) p^2}.
\ea

\begin{figure}
\begin{center}
\begin{tikzpicture}[scale=1]
\draw[gray](-2,-2.5) -- (-2,3.5);
\draw[gray](-1,-2.5) -- (-1,3.5);
\draw[gray]( 0,-2.5) -- ( 0,3.5);
\draw[gray]( 1,-2.5) -- ( 1,3.5);
\draw[gray]( 2,-2.5) -- ( 2,3.5);

\draw[gray](-2.5,-2) -- (2.5,-2);
\draw[gray](-2.5,-1) -- (2.5,-1);
\draw[gray](-2.5, 0) -- (2.5, 0);
\draw[gray](-2.5, 1) -- (2.5, 1);
\draw[gray](-2.5, 2) -- (2.5, 2);
\draw[gray](-2.5, 3) -- (2.5, 3);

\draw[gray, ultra thick](-0.1,-1.5) -- (-0.1,-0.5);
\draw[gray, ultra thick] (0.1,-1.5) --  (0.1,-0.5);
\draw[black, ultra thick] (-0.1,-2.5) -- (-0.1,-1.5);
\draw[black, ultra thick]  (0.1,-2.5) --  (0.1,-1.5);

\draw[black, ultra thick](-0.1,-0.5) -- (-0.1,0.5);
\draw[black, ultra thick] (0.1,-0.5) --  (0.1,0.5);
\draw[gray, ultra thick] (-0.1, 0.5) -- (-0.1,1.5);
\draw[gray, ultra thick]  (0.1, 0.5) --  (0.1,1.5);

\draw[black, ultra thick](-0.1, 1.5) -- (-0.1,2.5);
\draw[black, ultra thick] (0.1, 1.5) --  (0.1,2.5);
\draw[gray, ultra thick] (-0.1, 2.5) -- (-0.1,3.5);
\draw[gray, ultra thick]  (0.1, 2.5) --  (0.1,3.5);

\filldraw[black]  (-0.1,-2.0) circle (2pt);
\filldraw[black]  ( 0.1,-2.0) circle (2pt);
\filldraw[gray] (-0.1,-1.0) circle (2pt);
\filldraw[gray] ( 0.1,-1.0) circle (2pt);

\filldraw[black] (-0.1,0.0) circle (2pt);
\filldraw[black] ( 0.1,0.0) circle (2pt);
\filldraw[gray]  (-0.1,1.0) circle (2pt);
\filldraw[gray]  ( 0.1,1.0) circle (2pt);

\filldraw[black] (-0.1,2.0) circle (2pt);
\filldraw[black] ( 0.1,2.0) circle (2pt);
\filldraw[gray]  (-0.1,3.0) circle (2pt);
\filldraw[gray]  ( 0.1,3.0) circle (2pt);

\draw[black, dashed, ultra thick,->] (.1,0) .. controls +(up:5mm) and +(up:5mm) .. (1,0) node[anchor=south west] {$1$};
 
\draw[black, very thick,->] (-.1,0) .. controls +(down:5mm) and +(down:5mm) .. (1,0) node[anchor=north west] {$p$};
\draw[black, very thick,->] (-.1,0) .. controls +(down:0mm) and +(down:0mm) .. (-1,-1) node[anchor=north east] {$1-p$};

\draw[gray, dashed ,ultra thick,->] (.1,1) .. controls +(up:8mm) and +(up:7mm) .. (-1,1) node[anchor=south east] {$1$};
\draw[gray, very thick,->] (-.1,1)  .. controls +(down:5mm) and +(down:5mm) .. (1,1) node[anchor=north west] {$1-p$};
\draw[gray, very thick,->] (-.1,1) .. controls +(down:0mm) and +(down:0mm) .. (-1,0) node[anchor=north east] {$p$};

\draw[black]  (-2,-2) node[anchor=south east] {$-2$};
\draw[black]  (-2,-1) node[anchor=south east] {$-1$};
\draw[black]  (-2,0) node[anchor=south east] {$0$};
\draw[black]  (-2,1) node[anchor=south east] {$1$};
\draw[black]  (-2,2) node[anchor=south east] {$2$};
\draw[black]  (-2,3) node[anchor=south east] {$3$};

\draw[black]  (-2,-2) node[anchor=north east] {$-2$};
\draw[black]  (-1,-2) node[anchor=north east] {$-1$};
\draw[black]  (-.1,-2) node[anchor=north east] {$-0$};
\draw[black]  (.1,-2) node[anchor=north west] {$+0$};
\draw[black]  (1,-2) node[anchor=north west] {$1$};
\draw[black]  (2,-2) node[anchor=north west] {$2$};

\draw[black]  (2,3.6) node[anchor=north west] {a)};
\end{tikzpicture}
\hspace{2cm}
\begin{tikzpicture}[scale=1]
\draw[gray](-2,-2.5) -- (-2,3.5);
\draw[gray](-1,-2.5) -- (-1,3.5);
\draw[gray]( 0,-2.5) -- ( 0,3.5);
\draw[gray]( 1,-2.5) -- ( 1,3.5);
\draw[gray]( 2,-2.5) -- ( 2,3.5);

\draw[gray](-2.5,-2) -- (2.5,-2);
\draw[gray](-2.5,-1) -- (2.5,-1);
\draw[gray](-2.5, 0) -- (2.5, 0);
\draw[gray](-2.5, 1) -- (2.5, 1);
\draw[gray](-2.5, 2) -- (2.5, 2);
\draw[gray](-2.5, 3) -- (2.5, 3);

\draw[black, ultra thick](-0.1,-1.5) -- (-0.1,-0.5);
\draw[gray, ultra thick] (0.1,-1.5) --  (0.1,-0.5);
\draw[gray, ultra thick] (-0.1,-2.5) -- (-0.1,-1.5);
\draw[black, ultra thick]  (0.1,-2.5) --  (0.1,-1.5);

\draw[gray, ultra thick](-0.1,-0.5) -- (-0.1,0.5);
\draw[black, ultra thick] (0.1,-0.5) --  (0.1,0.5);
\draw[black, ultra thick] (-0.1, 0.5) -- (-0.1,1.5);
\draw[gray, ultra thick]  (0.1, 0.5) --  (0.1,1.5);

\draw[gray, ultra thick](-0.1, 1.5) -- (-0.1,2.5);
\draw[black, ultra thick] (0.1, 1.5) --  (0.1,2.5);
\draw[black, ultra thick] (-0.1, 2.5) -- (-0.1,3.5);
\draw[gray, ultra thick]  (0.1, 2.5) --  (0.1,3.5);

\filldraw[gray]  (-0.1,-2.0) circle (2pt);
\filldraw[black]  ( 0.1,-2.0) circle (2pt);
\filldraw[black] (-0.1,-1.0) circle (2pt);
\filldraw[gray] ( 0.1,-1.0) circle (2pt);

\filldraw[gray] (-0.1,0.0) circle (2pt);
\filldraw[black] ( 0.1,0.0) circle (2pt);
\filldraw[black]  (-0.1,1.0) circle (2pt);
\filldraw[gray]  ( 0.1,1.0) circle (2pt);

\filldraw[gray] (-0.1,2.0) circle (2pt);
\filldraw[black] ( 0.1,2.0) circle (2pt);
\filldraw[black]  (-0.1,3.0) circle (2pt);
\filldraw[gray]  ( 0.1,3.0) circle (2pt);

\draw[black, dashed, ultra thick,->] (.1,0) .. controls +(up:5mm) and +(up:5mm) .. (1,0) node[anchor=south west] {$1$};
 
\draw[gray, very thick,->] (-.1,0) .. controls +(down:5mm) and +(down:5mm) .. (1,0) node[anchor=north west] {$1-p$};
\draw[gray, very thick,->] (-.1,0) .. controls +(down:0mm) and +(down:0mm) .. (-1,-1) node[anchor=north east] {$p$};

\draw[gray, dashed ,ultra thick,->] (.1,1) .. controls +(up:8mm) and +(up:7mm) .. (-1,1) node[anchor=south east] {$1$};
\draw[black, very thick,->] (-.1,1)  .. controls +(down:5mm) and +(down:5mm) .. (1,1) node[anchor=north west] {$p$};
\draw[black, very thick,->] (-.1,1) .. controls +(down:0mm) and +(down:0mm) .. (-1,0) node[anchor=north east] {$1-p$};

\draw[black]  (-2,-2) node[anchor=south east] {$-2$};
\draw[black]  (-2,-1) node[anchor=south east] {$-1$};
\draw[black]  (-2,0) node[anchor=south east] {$0$};
\draw[black]  (-2,1) node[anchor=south east] {$1$};
\draw[black]  (-2,2) node[anchor=south east] {$2$};
\draw[black]  (-2,3) node[anchor=south east] {$3$};

\draw[black]  (-2,-2) node[anchor=north east] {$-2$};
\draw[black]  (-1,-2) node[anchor=north east] {$-1$};
\draw[black]  (-.1,-2) node[anchor=north east] {$-0$};
\draw[black]  (.1,-2) node[anchor=north west] {$+0$};
\draw[black]  (1,-2) node[anchor=north west] {$1$};
\draw[black]  (2,-2) node[anchor=north west] {$2$};

\draw[black]  (2,3.6) node[anchor=north west] {b)};
\end{tikzpicture} 
\end{center}   
\caption{Two models with a two-sided 2-periodic semi-permeable membrane. The left-hand sides of the membranes a) and b)
are shifted by 1 along the $y$-axis. 
\label{f:fig}}
\end{figure}
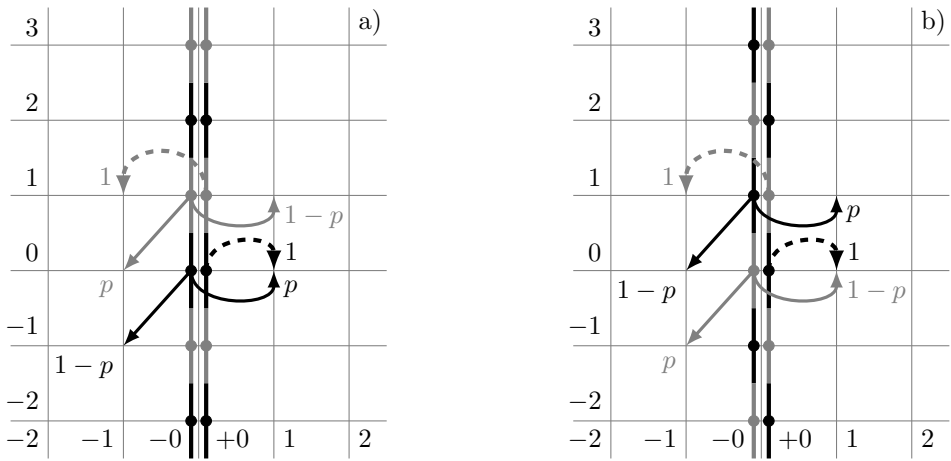
\end{exa}
\begin{exa}
Consider the random walk from Example \ref{exa:1_periodic}, where transition
probabilities from the ``right'' side of the membrane are the same, and transition
probabilities from 
 the ``left'' side are shifted by 1 along the $y$-axis, see Fig.~\ref{f:fig} b). 

The results for this model  are obtained immediately by interchanging $p$ and $1-p$ in 
\eqref{e:gammac}. Notice that parameters of effective permeability and effective slide are different,
i.e., they depends on mutual disposition of the ``left'' and the ``right'' sides of the membrane.
\end{exa}
The idea of the proof of the Theorem \ref{thm:periodic_membrane} consists in a decomposition of the process $\cZ$ into excursions 
starting and ending on the membrane $\cH$. The excursions have a probability law of excursions of the symmetric
random walk.

If the membrane is homogeneous ($k_2=\cdots=k_m=1$), then
it is well know that the limit of $x$ coordinate   is a skew Brownian motion,
see Harrison and Shepp \cite{HShepp-81}.  
To control a slide in $y$ direction we only have to control the number of visits to 0 of $x$-coordinate of the walk.
However, if the membrane is periodic with a non-trivial period, then sign of the $x$-coordinate of an excursion is selected in accordance with the 
transition probabilities \eqref{eq:periodic_membrane}, i.e., 
its sign depends on the position of the random walk 
at the last visit to the membrane. The slide along $y$ depends on that position too.

Due to the periodicity assumption \eqref{eq:periodic_membrane}, 
there is $d=2|U|:=2k_2\cdots k_m$ different types of excursions between consecutive
visits of the membrane.  
In order to treat this number of excursions at the same time we have to consider a natural
 generalization of the skew Brownian motion, 
namely the Walsh's Brownian motion. Hence we will show that the family of $d$ one-sided 
random walks converge to a Walsh's Brownian motion. This will allow us to 
 derive the effective permeability and sliding parameters.

The paper is organized as follows. In the next Section we introduce Walsh's Brownian motion and give its convenient  
realization as a $d$-dimensional stochastic process that takes values on the positive coordinate half-axes. We will also formulate 
two martingale characterizations of Walsh's Brownian motion. Section \ref{s:pm} is devoted to the proof of Theorem 
\ref{thm:periodic_membrane}. In the final Section \ref{s:rm} we show how our method 
can be applied to a model of a one-sided membrane that possesses some ergodic properties.

\section{Walsh's Brownian motion and its martingale characterizations\label{s:WBM}}

Walsh's Brownian motion (WBM) was introduced in the Epilogue of Walsh \cite{walsh1978diffusion} as a diffusion on $d$ rays on a two-dimensional plane
with the common origin.
On each ray WBM, is a standard one-dimensional Brownian motion that however can change the ray upon hitting the origin, i.e.,\ each ray is characterized
by a weight $p_i>0$, $i=1,\dots,d$, $p_1+\cdots+p_d=1,$ that heuristically can be understood as a probability to go on the ray number $i$.
Hence, the conventional WBM is a process $X$ on the plane expressed in polar coordinates as $X=(R_t,\theta_t)$ where $R$ is the reflecting 
Brownian motion and $\theta$ is a random process taking values on the set of 
$d$ angles on $[0,2\pi)$ and being constant during each excursion of $R$ from $0$. This representation has been used in various 
works on WBM including Barlow et al.\ \cite{barlow1989walsh},
Freidlin and Sheu
\cite{freidlin2000diffusion},
Hajri 
\cite{hajri2011stochastic},
Hajri and  Touhami
\cite{hajri2014ito}, and 
 Karatzas and Yan
\cite{karatzas2019semimartingales}.

In this paper prefer to embed the WBM into a $d$-dimensional Euclidean space as it was indicated in Walsh \cite{walsh1978diffusion}.
To this purpose, let $E$ be the union of non-negative coordinate half-axes in $\bR^d$, i.e.,\ 
\ba
E=\{x\in\bR^d\colon x_i\geq 0\text{ and } x_ix_j=0,\ i\neq j,\  i,j=1,\dots, d\}.
\ea
We fix probabilities $p_1,\dots,p_d> 0$, $p_1+\cdots+p_d=1$, and also denote $q_i=1-p_i$, $i=1,\dots,d$.
Let also $(\Omega,\rF,\bF,\P)$ be a filtered probability space satisfying the usual hypotheses. Then adopting 
the Markovian characterization of the WBM from Barlow et al.\ \cite{barlow1989walsh}, we say that WBM is a continuous 
Feller Markov process $X=(X_1,\dots,X_d)$ on $E$
with the one-dimensional laws given by
\ba
\E_0 \ex^{\lambda_1 X_1(t)+\cdots +\lambda_d X_d(t) }&=\sum_{k=1}^d p_k \E_0 \ex^{\lambda_k |W(t)|},\\
\E_x \ex^{\lambda_1 X_1(t)+\cdots+\lambda_d X_d(t)}&=
\displaystyle 
\E_{x_j}\Big[   \1(t<\tau_0)\ex^{\lambda_j W(t)}\Big]
+ \sum_{k=1}^d  p_k \E_{x_j}\Big[      \1(t\geq\tau_0)\ex^{\lambda_k |W(t)|} \Big]\\
&= \E_{x_j}\Big[\sum_{k=1}^d p_k \ex^{\lambda_k |W(t)|}\Big]
 + \E_{x_j}\Big[ \1(t<\tau_0) \Big( \ex^{\lambda_j W(t)}- 
\sum_{k=1}^d p_k\ex^{\lambda_k W(t)}\Big)\Big],\\
x&=(0,\dots,x_j,\dots,0),\ x_j>0, \lambda\in\mathbb C^d,
\ea
  $W$ is a standard Brownian motion and $\tau_0=\inf\{t\geq 0\colon W_t=0\}$.
  Notice that the last expectation can be considered as the expectation of a killed Brownian motion.

In Barlow et al.\ \cite{barlow1989walsh}, the authors also gave the martingale characterization of the WBM realized as a process on the plane. In terms of 
the $d$-dimensional realization $X$ of the WBM, their characterization takes the following form.

\begin{thm}[Propositon 3.1 and Theorem 3.2 in Barlow et al.\ \cite{barlow1989walsh}]
\label{Walsh1} 
Let $X=(X_1(t),\dots,X_d(t))_{t\geq 0}$ be an adapted continuous process. Then $X$ is a WBM
with parameters $p_1,\dots,p_d>0$ if and only if it satisfies 
the following conditions:
\begin{enumerate}
\item $X_i(t)\geq 0$ and $X_i(t)X_j(t)=0$ for all $i\neq j$ and $t\geq 0$;

\item for each $i=1,\dots,d$ the process
\ba
N_i(t):=q_i X_i(t)-p_i\sum_{j\neq i}X_j(t) ,\quad t\geq 0,
\ea
is a continuous martingale with respect to $\bF$;
\item 
for each $i=1,\dots,d$ the process 
\ba
\label{e:brN}
(N_i(t))^2 -\int_0^t \Big(q_i \1(X_i(s)>0)-p_i  \1(X_i(s)= 0)\Big)^2 \,\di s ,\quad t\geq 0,
\ea
is a continuous martingale with respect to $\bF$.
\end{enumerate}
\end{thm}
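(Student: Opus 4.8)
The whole argument is organised around the elementary identity, valid on $E$ because $p_1+\cdots+p_d=1$,
\[
N_i(t)=q_iX_i(t)-p_i\sum_{j\neq i}X_j(t)=X_i(t)-p_iR(t),\qquad R(t):=\sum_{j=1}^{d}X_j(t),
\]
which shows that $\sum_iN_i\equiv 0$ and that on the event $\{X_k(t)>0\}$ one has $R(t)=X_k(t)$, $N_k(t)=q_kR(t)$ and $N_j(t)=-p_jR(t)$ for $j\neq k$, while at the origin all the $N_i$ vanish.

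\emph{Necessity} (a WBM satisfies 1--3). Property 1 is the definition of $E$. For 2 and 3 I would use the standard description of the WBM as the Feller process on $E$ generated by $(\mathcal Lf)(x)=\tfrac12 f_k''(x_k)$ for $x$ on the $k$-th ray, with domain the functions that are $C^2$ on each ray, agree at the origin, and satisfy the gluing condition $\sum_kp_kf_k'(0^+)=0$. The function $g_i$ equal to $q_ix_i$ on ray $i$ and to $-p_ix_j$ on ray $j\neq i$ is linear on each ray, vanishes at the origin and obeys $\sum_kp_kg_i'(0^+)=p_iq_i-p_i(1-p_i)=0$, so $g_i\in\mathcal D(\mathcal L)$ with $\mathcal Lg_i\equiv 0$; hence $N_i=g_i(X)$ is a martingale. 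Likewise $g_i^{\,2}$ is quadratic on each ray and vanishes together with its radial derivative at the origin, so $g_i^{\,2}\in\mathcal D(\mathcal L)$ with $(\mathcal Lg_i^{\,2})(x)=q_i^2\1(x_i>0)+p_i^2\1(x_i=0)=(q_i\1(x_i>0)-p_i\1(x_i=0))^2$ for $x\neq 0$; since a WBM spends zero time at the origin, Dynkin's formula gives 3. (As $g_i$ and $g_i^{\,2}$ are unbounded one applies this to truncations and passes to the limit using the Gaussian moment bounds contained in the explicit one-dimensional laws; equivalently, 2 and 3 follow by differentiating those Laplace transforms twice at $\lambda=0$ and using the Markov property.)

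\emph{Sufficiency} (1--3 imply WBM). The plan is to turn 1--3 into the statement that $X$ solves the martingale problem for $(\mathcal L,\mathcal D(\mathcal L))$ and then to invoke its well-posedness. First, the set $\{t:X_t=0\}$ has empty interior a.s.: on an interval on which $X\equiv 0$ every $N_i$ would be constant, while 3 forces $\langle N_i\rangle$ to grow there at the positive rate $p_i^2$. Hence $\{R>0\}=\bigcup_k\{X_k>0\}$ is open, on each of its connected components $X$ sits on a single ray, and there 3 gives $\di\langle N_k\rangle=q_k^2\,\di t$, so (since $N_k=q_kR$ on $\{X_k>0\}$) $\di\langle R\rangle=\di t$ on $\{R>0\}$. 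From this, and $R\ge0$ continuous, one shows that $R$ is a continuous semimartingale whose local time $\ell$ at $0$ satisfies: $R-\ell$ is a continuous martingale with $\langle R-\ell\rangle_t=t$, and $\ell$ is flat off $\{R=0\}$; by the L\'evy/Tanaka characterization $R$ is a reflecting Brownian motion (in particular it spends zero time at $0$, so the occupation integrals above load only $\{R>0\}$). Next, for $f\in\mathcal D(\mathcal L)$ put $c_k:=f_k'(0^+)$; the gluing condition reads $\sum_kp_kc_k=0$, so the continuous martingale $M^f:=\sum_kc_kN_k$ equals $\bigl(c_k-\sum_jp_jc_j\bigr)R=c_kR$ on ray $k$. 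Writing $f_k(r)=f(0)+c_kr+\rho_k(r)$ with $\rho_k(0)=\rho_k'(0)=0$ and $\rho_k''=f_k''$, one gets $f(X_t)=f(0)+M^f_t+\rho(X_t)$, and applying It\^o to $\rho_k(N_k/q_k)$ on each excursion interval of $R$ (using $\di\langle N_k\rangle=q_k^2\,\di t$ there) yields $\di\rho(X_t)=\frac{\rho_k'(R_t)}{q_k}\,\di N_k(t)+\tfrac12 f_k''(R_t)\,\di t=(\text{local martingale})+(\mathcal Lf)(X_t)\,\di t$ on $\{R>0\}$. Because $\rho_k'(0)=0$ for \emph{every} $k$, the boundary times $\{R=0\}$ contribute nothing to the finite-variation part (here the semimartingale property from the first step is used together with It\^o--Tanaka on $E$, and this is the second place where the gluing condition enters), so $f(X_t)-f(X_0)-\int_0^t(\mathcal Lf)(X_s)\,\di s$ is a martingale for every $f\in\mathcal D(\mathcal L)$. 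Together with property 1, $X$ then solves the $(\mathcal L,\mathcal D(\mathcal L))$-martingale problem started at $X_0$; that problem is well posed, and the WBM with weights $p_1,\dots,p_d$ (which solves it, by the necessity part) is its unique solution, so $X$ is that WBM.

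The delicate point is the identification of $R$ as a reflecting Brownian motion together with the analysis at the origin in the sufficiency part: one has to show that a process whose coordinates are controlled only through the one-dimensional martingales $N_i$ is a genuine semimartingale near the origin and that no spurious local-time drift is created there. This is the technical core of Barlow et al.\ \cite{barlow1989walsh}, and it is where Brownian excursion theory (or a careful Tanaka-type computation) enters.
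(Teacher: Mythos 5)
A preliminary remark: the paper itself contains no proof of Theorem~\ref{Walsh1} --- it is quoted from Barlow, Pitman and Yor \cite{barlow1989walsh} and used as an input to the proof of Theorem~\ref{Walsh} --- so your sketch can only be measured against the original argument. There, the converse direction is proved by working directly with the functions $h_i$ (your $g_i$/$N_i$, modulo the typo noted in the paper's Remark), identifying the radial part as a reflecting Brownian motion and then pinning down the law through an excursion/skew-Brownian identification, not by passing through an abstract martingale problem. Your necessity part is fine (the domain/unboundedness issues you flag are genuine but routine), and several of your intermediate claims in the sufficiency part are correct and even admit cleaner proofs than you indicate: for instance, $R$ is a semimartingale because $R=N_i^{+}/q_i+N_i^{-}/p_i$ for any fixed $i$ and Tanaka applies; the finite-variation part is then a multiple of $L^{N_i}$, carried by $\{N_i=0\}=\{R=0\}$; and the Lebesgue-null occupation of the origin follows from condition 3, since $\di\langle N_i\rangle_s\geq p_i^2\,\1(R(s)=0)\,\di s$ while $\int_0^t \1(N_i(s)=0)\,\di\langle N_i\rangle_s=0$ by the occupation-time formula. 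Likewise your ``no boundary contribution'' step is best done by writing $X_k=N_k^{+}/q_k$ and applying the Meyer--It\^o formula to $\phi_k(z)=\rho_k(z^{+}/q_k)$, whose derivative is continuous at $0$ precisely because $\rho_k'(0)=0$, so no local-time term can appear.

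The genuine gap is the final step: ``that problem is well posed, \dots\ so $X$ is that WBM.'' Uniqueness for the Walsh martingale problem is exactly the content of Theorem 3.2 in \cite{barlow1989walsh} that you are asked to prove; invoking it as known makes the argument circular at its only hard point. Everything before that step reduces conditions 1--3 to ``$X$ solves the $(\mathcal L,\mathcal D(\mathcal L))$-martingale problem,'' which is strictly weaker than the conclusion unless uniqueness is actually established --- either by a resolvent/ODE computation with the gluing condition, by the excursion-theoretic identification used in \cite{barlow1989walsh} (conditionally on the reflecting radial part, each excursion chooses its ray independently with probabilities $p_i$, which is what the optional-stopping arguments with the $N_i$ deliver), or by a precise citation of a result proved independently of Theorem~\ref{Walsh1}, e.g.\ the well-posedness theory of Karatzas and Yan \cite{karatzas2019semimartingales}. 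As written, the uniqueness step is asserted rather than proved, so the sketch does not yet constitute a proof of the stated theorem.
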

Note that since the product of the indicator functions in \eqref{e:brN} is identically zero, we have
\ba
\langle N_i\rangle_t&=
\int_0^t \Big(q_i \1(X_i(s)>0)-p_i  \1(X_i(s)= 0)\Big)^2 \,\di s\\
&=\int_0^t \Big( q_i^2 \1(X_i(s)>0)+p_i^2 \1(X_i(s)= 0)\Big)\,\di s.
\ea
Furthermore, it is clear, see Lemma 2.2 in Barlow et al.\ \cite{barlow1989walsh}, that the radial process
\ba
R(t):=\sum_{i=1}^d X_i(t)=\max_{1\leq i\leq d} X_i(t)
\ea
is a reflecting Brownian motion, and hence it has a local time at $0$ defined by:
\ba\label{e:loctime}
L^X(t):= L^R(t):=\lim_{\ve\to 0+}\frac{1}{2\e}\int_0^t \1\Big(\max_{1\leq i\leq d} X_i(s) \leq \ve\Big) \,\di s.
\ea
In the following theorem we give an equivalent martingale characterization of the WBM that better fits into the setting of this paper. 

\begin{thm}
\label{Walsh}
Let $X=(X_1(t),\dots,X_d(t))_{t\geq 0}$ and $\nu=(\nu(t))_{t\geq 0}$ be adapted continuous processes.
Then $X$ is a WBM with parameters $p_1,\dots,p_d>0$, and
$\nu$ is the local time of $X$ at $0$
 if and only if they satisfy   
the following conditions:
\begin{enumerate}[a)]
\item $X_i(t)\geq 0$ and $X_i(t)X_j(t)=0$ for all $i\neq j$, $t\geq 0$;

\item $\nu(0)=0$, $\nu$ is nondecreasing a.s., $\int_0^\infty \1(X(s)\neq 0)\, \di \nu(s)=0$ a.s.;

\item the processes $M_1,\dots, M_d$ defined by
\ba
\label{e:Mnu}
M_i(t):=X_i(t)-p_i \nu(t),\quad t\geq 0,
\ea
are continuous square integrable martingales with respect to $\bF$ with the
predictable quadratic variations
\ba
\label{e:brM}
\langle M_i\rangle_t=\int_0^t \1(X_i(s)>0 ) \, \di s.
\ea
\item $\int_0^\infty \1(X(s)=0) \,\di s=0$ a.s.
\end{enumerate}
\end{thm}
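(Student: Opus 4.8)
The plan is to show the equivalence by deducing the new characterization from the one in Theorem \ref{Walsh1}, and conversely. The key observation linking the two is the relation between the auxiliary martingales $N_i$ of Theorem \ref{Walsh1} and the local time $\nu$ of $X$ at $0$. First I would note that $\sum_{i=1}^d N_i(t) = \sum_i (q_i - (d-1 - \text{(correction)}) ) X_i$ collapses: since exactly one coordinate is nonzero at a time, $\sum_i N_i(t) = \sum_i q_i X_i(t) - \sum_i p_i \sum_{j\ne i} X_j(t) = \sum_i X_i(t) - \sum_i X_i(t) \sum_{j\ne i} p_j \cdot(\text{bookkeeping})$; carrying this out gives $\sum_i N_i = (1 - \text{something}) R$, and more usefully $N_i - N_k = X_i - X_k$ has bounded variation part zero, i.e. $X_i - X_k$ is a martingale for all $i,k$. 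Combined with the fact that $R = \sum_j X_j$ is a reflecting Brownian motion (Lemma 2.2 of Barlow et al.\ \cite{barlow1989walsh}), each $X_i$ is a semimartingale, and the Tanaka/Skorokhod decomposition of the reflecting Brownian motion $R$ produces a nondecreasing process, which we take as the candidate $\nu := L^R = L^X$ from \eqref{e:loctime}. Then $X_i$ decomposes as a continuous local martingale $M_i$ plus a multiple of $\nu$; identifying the multiple as $p_i$ is the heart of the computation.

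\medskip

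\noindent\textbf{From Theorem \ref{Walsh1} to Theorem \ref{Walsh}.}
Assume $X$ is a WBM with weights $p_1,\dots,p_d$. Property a) is immediate. By Lemma 2.2 of \cite{barlow1989walsh}, $R=\sum_i X_i$ is a reflecting Brownian motion; set $\nu:=L^R$ as in \eqref{e:loctime}. Then $\nu(0)=0$, $\nu$ is nondecreasing, and $\nu$ increases only when $R=0$, i.e.\ only when $X=0$, which is property b); property d) holds because the occupation time of $\{0\}$ by a reflecting Brownian motion is zero a.s. For c): since $N_i - N_k = X_i - X_k$ is a continuous martingale (as the drift parts cancel) and $R$ is a semimartingale, each $X_i = \frac1d\big(R + \sum_{k}(X_i - X_k)\big)$ is a continuous semimartingale; write its canonical decomposition $X_i = M_i + A_i$ with $M_i$ a continuous local martingale, $A_i$ continuous of bounded variation, $A_i(0)=0$. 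Summing over $i$ gives $R = \sum_i M_i + \sum_i A_i$; comparing with the Skorokhod decomposition $R = \beta + L^R$ of the reflecting Brownian motion (with $\beta$ a Brownian motion) and using that $X_i - X_k$ are martingales, one gets $\sum_i A_i = \nu$ and $A_i$ increases only on $\{X=0\}=\{R=0\}$, where all $X_j$ vanish. To pin down $A_i = p_i\nu$, I would use the martingale $N_i = q_i X_i - p_i \sum_{j\ne i} X_j = X_i - p_i R$ from Theorem \ref{Walsh1}: its bounded-variation part is $A_i - p_i \nu$, which must therefore vanish, giving \eqref{e:Mnu}. Finally \eqref{e:brM}: from condition 3 of Theorem \ref{Walsh1}, $\langle N_i\rangle_t = \int_0^t q_i^2\1(X_i(s)>0)\,\di s$ on the event where $X_i>0$, while $M_i = N_i + p_i(\sum_{j\ne i} X_j - \nu) $... more directly, on $\{X_i>0\}$ we have $M_i = X_i - p_i\nu = N_i + p_i R - p_i\nu$ and $R$ has no martingale contribution beyond $\beta$; a short computation using $\langle R\rangle_t = t$ and $\langle N_i\rangle$ from \eqref{e:brN} yields $\langle M_i\rangle_t = \int_0^t \1(X_i(s)>0)\,\di s$. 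Square integrability on compacts follows from the BDG inequality together with $\langle M_i\rangle_t \le t$.

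\medskip

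\noindent\textbf{From Theorem \ref{Walsh} to Theorem \ref{Walsh1}.}
Conversely, assume a)--d). Summing \eqref{e:Mnu} over $i$ and using a) gives $R(t) = \sum_i X_i(t) = \sum_i M_i(t) + \nu(t)$, and since on $\{X_i>0\}$ all other coordinates vanish, the quadratic variations \eqref{e:brM} are orthogonally supported, so $\langle \sum_i M_i\rangle_t = \sum_i \int_0^t \1(X_i(s)>0)\,\di s = \int_0^t \1(X(s)\ne 0)\,\di s = t$ by d); hence $\beta := \sum_i M_i$ is a Brownian motion by L\'evy's characterization, $R = \beta + \nu$ with $\nu$ nondecreasing increasing only on $\{R=0\}$, so by the uniqueness in the Skorokhod problem $R$ is a reflecting Brownian motion and $\nu = L^R$. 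Now define $N_i := X_i - p_i R = M_i + p_i M_i \cdot(\text{?})$; cleanly, $N_i = X_i - p_i R = (M_i + p_i\nu) - p_i(\beta + \nu) = M_i - p_i\beta$, a continuous martingale, and $N_i = q_i X_i - p_i\sum_{j\ne i}X_j$, verifying condition 2. For condition 3, compute $\langle N_i\rangle_t = \langle M_i - p_i\beta\rangle_t$; using $\langle M_i, M_j\rangle = 0$ for $i\ne j$ (their brackets have disjoint supports) and $\langle M_i\rangle_t=\int_0^t\1(X_i>0)\di s$, $\langle\beta\rangle_t=t$, one obtains $\langle N_i\rangle_t = \int_0^t\big(q_i^2\1(X_i(s)>0) + p_i^2\1(X_i(s)=0)\big)\di s$, which is exactly \eqref{e:brN}. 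The Feller/Markov property and the one-dimensional laws then follow from Theorem \ref{Walsh1}.

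\medskip

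\noindent\textbf{Main obstacle.}
The delicate point is the identification of the bounded-variation parts: showing that the drift of $X_i$ is exactly $p_i\nu$ rather than some other nondecreasing process supported on $\{X=0\}$, and dually, in the converse direction, showing that $\beta=\sum_i M_i$ is a \emph{Brownian motion} (which needs condition d) to rule out a ``sticky'' contribution to $\langle\beta\rangle$) and that the pair $(R,\nu)$ solves the Skorokhod problem so that $\nu$ is genuinely the local time. Everything else is bookkeeping with the orthogonality of the supports $\{X_i>0\}$ and L\'evy's and Tanaka's characterizations; I expect the cross-bracket computations $\langle M_i,M_j\rangle=0$ and the passage between \eqref{e:brN} and \eqref{e:brM} to be routine once the structural facts above are in place.
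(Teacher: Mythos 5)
Your direction a)--d) $\Rightarrow$ 1--3 is essentially the paper's argument: use the disjointness of the sets $\{X_i>0\}$ to get $\langle M_i,M_j\rangle=0$, conclude via L\'evy that $\beta=\sum_i M_i$ is a Brownian motion, identify $\nu=L^R$ by uniqueness of the Skorokhod/semimartingale decomposition of $R$, and then recover the $N_i$ and their brackets by linear combination. Nothing to flag there.

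The direction 1--3 $\Rightarrow$ a)--d) is where you diverge from the paper, and where the gaps are. First, a concrete error: you assert $N_i-N_k=X_i-X_k$ and conclude that $X_i-X_k$ is a martingale. Since $N_i=q_iX_i-p_i\sum_{j\neq i}X_j=X_i-p_iR$, one actually has $N_i-N_k=(X_i-X_k)-(p_i-p_k)R$, and $R$ carries the nontrivial bounded-variation part $\nu$. So $X_i-X_k$ is \emph{not} a martingale unless $p_i=p_k$. This error is harmless only because you never really use it: the identity $N_i=X_i-p_iR$ already makes $X_i=N_i+p_iR$ a semimartingale and, by uniqueness of the canonical decomposition of the martingale $N_i$, forces the bounded-variation part of $X_i$ to be $p_i\nu$. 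That part is fine and is genuinely cleaner than the paper's derivation.

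The real gap is the bracket $\langle M_i\rangle$. You write $M_i=N_i+p_i\beta$ and say ``a short computation using $\langle R\rangle_t=t$ and $\langle N_i\rangle$ from \eqref{e:brN} yields $\langle M_i\rangle_t=\int_0^t\1(X_i(s)>0)\,\di s$,'' but that computation requires the cross-bracket $\langle N_i,\beta\rangle$, and this is \emph{not} among the data supplied by conditions 1--3 of Theorem \ref{Walsh1}. Computing it is precisely what the paper spends most of part ii) of its proof on: it rescales $N_i$ to an oscillating Brownian motion, invokes Nakao's pathwise-uniqueness theorem, identifies $Y_i=r_i(\tilde N_i)$ as a skew Brownian motion, and extracts the decomposition of $X_i=\max\{Y_i,0\}$ by Tanaka's formula. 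There is a more direct route that would fill your gap without all that machinery: apply It\^o's formula to the pointwise identity $N_i^2=q_i^2X_i^2+p_i^2\bigl(R^2-X_i^2\bigr)$ (valid by a)), compare bounded-variation parts, and use $\langle R\rangle_t=t$ and $\langle X_i\rangle=\langle M_i\rangle$; after simplification this yields $\langle N_i\rangle_t=p_i^2t+(q_i^2-p_i^2)\langle M_i\rangle_t$, and matching against \eqref{e:brN} gives $\langle M_i\rangle_t=\int_0^t\1(X_i(s)>0)\,\di s$ whenever $p_i\neq\tfrac12$; the degenerate case $p_i=\tfrac12$ still needs a separate observation (e.g.\ passing through a perturbation argument or the paper's skew-BM reduction). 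Without either this or the paper's oscillating-BM argument, the step is not justified.
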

\begin{rem}
There is a typo  in \S 3 in Barlow et al.\ \cite{barlow1989walsh} (p. 281).
One should remove the indicator $\1_{r>0}$ in the formula for $h_i(r,\theta)$,
 otherwise the zero process is also a solution of equations (3.2) or (3.3) from their paper.
Note that condition (d) in our Theorem \ref{Walsh} prohibits $0$ to be a sticky point of $X$.
\end{rem}

\begin{proof}
We will show that conditions 1--3 of Theorem \ref{Walsh1} and a)--d) of Theorem \ref{Walsh} are equivalent.

\noindent
i) We show that a)--d) $\Rightarrow$  1--3.

First, it is obvious that a) implies 1.

By c), each process $N_i$ is a martingale as a linear combination of martingales:
\ba
N_i(t)&=q_i X_i(t)-p_i\sum_{j\neq i}X_j(t)\\
&= q_i( M_i(t)+p_i \nu(t))- p_i\sum_{j\neq i}  ( M_j(t)+p_j \nu(t)) \\
&=q_i M_i(t)- p_i \sum_{j\neq i}  M_j(t).
\ea

Observe that quadratic covariations $\langle M_i,M_j\rangle$ vanish for $i\neq j$. Indeed, applying the Cauchy-type inequality for 
quadratic covariations (Proposition 15.10 in Kallenberg \cite{Kallenberg-02}) we get
\ba
\langle M_i,M_j\rangle_t& = \int_0^t \di \langle M_i,M_j\rangle_s= \int_0^t\Big(\1(X_i(s)>0)+\1(X_i(s)=0)
 \Big)\, \di \langle M_i,M_j\rangle_s\\
&\leq \Big(\int_0^t \1(X_i(s)>0)\, \di \langle M_j\rangle_s \cdot  \int_0^t \di \langle M_i\rangle_s   
\Big)^{1/2}
+\Big(\int_{0}^t\1(X_i(s)=0) \,\di \langle M_i\rangle_s\cdot  \int_0^t \di \langle M_j\rangle_s   \Big)^{1/2}\\
&\leq \Big(\int_0^t \1(X_i(s)>0) \1(X_j(s)>0)\, \di s \cdot  t \Big)^{1/2}+\Big(\int_0^t \1(X_i(s)=0)
\1(X_i(s)>0) \,\di s\cdot t  \Big)^{1/2}\\
&=0
\ea
Consequently with the help of c) we obtain the martingale property 3. Indeed, by the It\^o formula we get
\ba
(N_i(t))^2
&=(N_i(0))^2+2\int_0^t N_i(s)\,\di N_i(s) + \langle N_i\rangle_t \\
&=(N_i(0))^2+2q_i \int_0^t N_i(s)\,\di M_i(s) -2p_i\sum_{j\neq i}  \int_0^t N_i(s)\,\di M_j(s) \\
&+ q_i^2  \langle M_i\rangle_t + p_i^2\sum_{j\neq i} \langle M_j\rangle_t.
\ea
We have
\ba
\int_0^t \Big(q_i \1(X_i(s)>0)-p_i \1(X_i(s)= 0)\Big)^2 \,\di s
&=\int_0^t \Big(q_i^2 \1(X_i(s)>0)+p_i^2 \1(X_i(s)= 0)\Big)\,\di s\\
&= q_i^2  \langle M_i\rangle_t + p_i^2\sum_{j\neq i} \langle M_j\rangle_t,
\ea
where we used (d) in the last equality. Hence we get
the property 3. 

Show that the process $\nu$ is the local time of $X$ at zero. Let $L^X$ be the local time of $X$ ar zero and
let
\ba
R(t)=\max_{1\leq i\leq d} X_i(t)= \sum_{i=1}^d X_i(t)
\ea
be the radial part of the WBM $X$. By Lemma 2.2 in Barlow et al.\ \cite{barlow1989walsh}, $R$ is a reflecting Brownian motion 
with a local time $L^R$, and
hence
\ba
R(t)-L^R(t)
\ea
is a Brownian motion.
On the other hand
\ba
R(t)-\nu(t)=\sum_{i=1}^d M_i(t)
\ea
and $Z(t):=\sum_{i=1}^d M_i(t)$ is a continuous martingale with the bracket (we use 3.)
\ba
\langle Z\rangle_t=\sum_{i=1}^d \int_0^t \1(X_i(s)> 0)\,\di s=t,
\ea
thus, $Z$ is a $\bF$-Brownian motion. By the uniqueness of the semimartingale decomposition, $\nu=L^R$. Notice that 
$L^R=L^X$.
 
\noindent 
ii) We show that 1--3 $\Rightarrow$ a)--d). 

Let $X$ be a WBM. Denote $\nu:=L^X$ its local time at 0. The properties a), b) and d) follow immediately. 

For each $i=1,\dots,d$ consider the process
\ba
Y_i(t)= X_i(t)-\sum_{j\neq i}X_j(t) ,\quad t\geq 0.
\ea
and a rescaled martingale 
\ba
\tilde N_i(t):=\frac{1}{p_iq_i}N_i(t)=\frac{1}{p_i}X_i(t)-\frac{1}{q_i}\sum_{j\neq i}X_j(t) ,\quad t\geq 0,
\ea
with the bracket
\ba
\langle \tilde N_i\rangle_t=\int_0^t \Big(\frac{1}{p_i^2} \1(X_i(s)>0)+\frac{1}{q_i^2} \1(X_i(s)= 0)\Big)\,\di s.
\ea
Then obviously
\ba
\1(N_i(s)>0)&=
\1(\tilde N_i(s)>0)= \1(X_i(s)>0)=\1(Y_i(s)>0),\\
\1(N_i(s)<0)&= \1(\tilde N_i(s)<0) =\1(Y_i(s)<0).
\ea
Consider the processes
\ba
W_i(t)&:=\int_0^t \Big( p_i\1(\tilde N_i(s)>0) + q_i \1(\tilde N_i(s)< 0)  \Big)\,\di \tilde N_i(s).
\ea
These process are continuous martingales with the bracket
\ba
\langle W_i\rangle_t =t,
\ea
hence they are Brownian motions. Denoting 
$\sigma_i(x):=\frac{1}{p_i} \1(x>0)+ \frac{1}{q_i}\1(x< 0)$ 
we get that $\tilde N_i$ satisfies the SDE
\ba
\tilde N_i(t)= \tilde N_0+\int_0^t \sigma_i(\tilde N_i(s))\,\di W_i(s) , 
\ea
and hence each $N_i$ is the so-called oscillating Brownian motion, see Keilson and Wellner \cite{keilson1978oscillating}.
By Nakao's theorem, see Nakao \cite{nakao1972pathwise}, 
this SDE has a unique strong 
solution. 
Let $r_i(x)= p_i \1(x>0)+ q_i\1(x< 0)$ so that
\ba
Y_i=r_i(\tilde N_i).
\ea
Repeating literally the calculations from Section 5.2 in Lejay \cite{Lejay-06} we get that
\ba
Y_i(t)=Y_i(0)+ W_i(t)+ (2p_i-1)L^{Y_i}(t),
\ea
where $L^{Y_i}(t)$ is the symmetric local time of ${Y_i}$ at 0.

Hence $Y_i$ is a skew Brownian motion with parameter $(2p_i-1)$. Moreover the radial process $R=|Y_i|=|X|$ is a reflected Brownian motion and 
\ba
R(t)=|Y_i(t)|=R(0)+\int_0^t \sgn (Y_i)\,\di W_i(s)+  L^{Y_i}(t).
\ea
Futhermore,
\ba
R(t)=R(0)+W(t)+L^R(t)
\ea
for some Brownian motion $W$.
From the uniqueness of the decomposition of $R$ as a semimartingale we get that $L^{Y_i}=L^R$ for all $i=1,\dots,d$.

It follows from \cite[Theorem 1.7, Chapter VI]{RevuzYor05} that the right and left 
local times of $Y_i$ at 0 are equal to
\ba
L^{Y_i}_\text{left}(t)= 2(1-p_i) L^{Y_i}(t),\ \ L^{Y_i}_\text{right}(t)= 2p_i L^{Y_i}(t).
\ea
Finally Tanaka's formula yields 
\ba
X_i(t)&=\max\{Y_i(t),0\}= \\
&=\max\{Y_i(0),0\}+  \int_0^t \1(Y_i(s)>0)\,\di W_i(s) + \frac12 L^{Y_i}_\text{right}(t)\\
&= \max\{Y_i(0),0\}+  \int_0^t \1(Y_i(s)>0)\,\di W_i(s) + p_i L^{R}(t),\\
\ea
and thus the process $M_i=X_i-p_i\nu$ is a continuous martingale with the bracket \eqref{e:brM}.
\end{proof}

\begin{cor}
\label{c:SBM}
Let $X=(X_1(t),\dots,X_d(t))_{t\geq 0}$ be a WBM with parameters $p_1,\dots,p_d>0$ and let $I\subseteq\{1,\dots, d\}$. Let
\ba
\gamma=\sum_{i\in I} p_i - \sum_{j\in I^c} p_j= 2\sum_{i\in I} p_i-1\in[-1,1].
\ea
Then the process
\ba
X^\gamma(t):=\sum_{i\in I} X_i(t)- \sum_{j\in I^c} X_j(t),\quad t\geq 0,
\ea
is a skew Brownian motion with the parameter $\gamma$.
\end{cor}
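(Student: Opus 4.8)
The plan is to read off from the martingale characterization in Theorem~\ref{Walsh} a Skorokhod-type decomposition of $X^\gamma$ and to recognize it as the one defining the skew Brownian motion. Let $\nu:=L^X$ be the local time of $X$ at $0$ and, for $i=1,\dots,d$, let $M_i(t):=X_i(t)-p_i\nu(t)$ be the continuous square integrable martingales provided by Theorem~\ref{Walsh}, so that $\langle M_i\rangle_t=\int_0^t\1(X_i(s)>0)\,\di s$; by the Kunita--Watanabe inequality together with the disjointness of the supports of the $X_i$ (exactly as in the proof of Theorem~\ref{Walsh}) one also has $\langle M_i,M_j\rangle\equiv 0$ for $i\neq j$. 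Substituting $X_i=M_i+p_i\nu$ and using $\gamma=\sum_{i\in I}p_i-\sum_{j\in I^c}p_j$ gives
\[
X^\gamma(t)=B(t)+\gamma\,\nu(t),\qquad B(t):=\sum_{i\in I}M_i(t)-\sum_{j\in I^c}M_j(t),
\]
with $B(0)=X^\gamma(0)$ and $\nu(0)=0$.

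First I would check that $\widetilde B:=B-B(0)$ is a standard Brownian motion. It is a continuous martingale, and orthogonality of the $M_i$ gives $\langle B\rangle_t=\sum_{k=1}^d\langle M_k\rangle_t=\int_0^t\sum_{k=1}^d\1(X_k(s)>0)\,\di s$. Since at most one coordinate of $X(s)$ is strictly positive by condition a), $\sum_{k=1}^d\1(X_k(s)>0)=\1(X(s)\neq 0)$, and condition d) gives $\int_0^t\1(X(s)=0)\,\di s=0$; hence $\langle B\rangle_t=t$, and L\'evy's characterization applies.

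Second, I would identify $\nu$ with the symmetric local time of $X^\gamma$ at $0$. Because the $X_i$ are non-negative with pairwise disjoint supports, $|X^\gamma(t)|=\sum_{k=1}^dX_k(t)=R(t)$, the radial part of the WBM, which by Lemma~2.2 in Barlow et al.~\cite{barlow1989walsh} is a reflecting Brownian motion with local time $L^R=\nu$ at $0$; moreover $\nu$ is carried by $\{X=0\}=\{X^\gamma=0\}$ by condition b). Applying the symmetric Tanaka formula to the continuous semimartingale $X^\gamma=X^\gamma(0)+\widetilde B+\gamma\nu$ and discarding the integral against $\di\nu$ (supported where $\sgn(X^\gamma)=0$), we get $R(t)=|X^\gamma(0)|+\int_0^t\sgn(X^\gamma(s))\,\di\widetilde B(s)+L^{X^\gamma}(t)$, where $L^{X^\gamma}$ is the symmetric local time of $X^\gamma$ at $0$ and the stochastic integral is a Brownian motion (its bracket equals $\int_0^t\1(X^\gamma(s)\neq0)\,\di s=t$). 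Comparing with the Skorokhod decomposition $R(t)=R(0)+(\text{Brownian motion})+\nu(t)$ and invoking uniqueness of the semimartingale decomposition of $R$ yields $L^{X^\gamma}=\nu$. Therefore $X^\gamma(t)=X^\gamma(0)+\gamma L^{X^\gamma}(t)+\widetilde B(t)$, which is precisely equation~\eqref{eq:Skew} with a (possibly) nonzero starting point; since $\gamma\in[-1,1]$, this equation is strongly well posed by Harrison and Shepp~\cite{HShepp-81}, so $X^\gamma$ is the skew Brownian motion with parameter~$\gamma$.

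Everything above is essentially bookkeeping once Theorem~\ref{Walsh} is available; the one step that requires genuine care is the identification $\nu=L^{X^\gamma}$ — i.e.\ verifying that the nondecreasing process inherited from the WBM structure really is the symmetric local time of the one-dimensional projection $X^\gamma$ — and this is where the identity $|X^\gamma|=R$ and the uniqueness of the Skorokhod decomposition of the reflecting Brownian motion enter.
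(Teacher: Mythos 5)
Your proof is correct and follows essentially the same route as the paper: decompose $X^\gamma = B + \gamma\nu$ using the martingales $M_i = X_i - p_i\nu$ from Theorem~\ref{Walsh}, show $B$ is a Brownian motion via orthogonality of the $M_i$ and condition (d), and identify $\nu$ with the local time of $X^\gamma$ at $0$. The only difference is one of completeness: the paper asserts without argument that the local times of $X$ and $X^\gamma$ at $0$ coincide, whereas you supply the missing justification via the symmetric Tanaka formula and uniqueness of the Skorokhod decomposition of $R = |X^\gamma|$, which is the right way to close that gap.
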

\begin{proof}
Without loss of generality assume that $I=\{1,\dots,k\}$ for some $0\leq k\leq d$. 
By Theorem~\ref{Walsh}, the process
\ba
W(t)&=\sum_{i=1}^k M_i(t)- \sum_{j=k+1}^d M_j(t)
\ea
is a continuous martingale. Taking into account \eqref{e:Mnu} we get
\ba
W(t)&= X^\gamma(t) - \gamma \nu(t).
\ea
Since the local times at $0$ of $X$ and $X^\gamma$ coincide, we get
\ba
W(t)= X^\gamma(t) - \gamma L(t)\\
\ea
where $L$ is the local time of $X^\gamma$ at $0$. The bracket of the martingale $W$ equals 
\ba
\langle W \rangle_t= \sum_{i=1}^d \int_0^t \1(X_i(s)>0)\,\di s = t,
\ea
and thus $W$ is a Brownian motion. In other words,
  $X^\gamma$ satisfies the SDE $X^\gamma(t)=W(t)+ \gamma L(t)$
and thus is a skew Brownian motion, see Harrison and Shepp \cite{HShepp-81}.
\end{proof}

\section{Proof of Theorem \ref{thm:periodic_membrane}\label{s:pm}}

Consider the Markov chain $\cZ=(\cX,Y)$ on the enlarged state space $\{\pm 0,\pm 1,\dots\}\times \mbZ^{m-1}$.
Let us decompose the process $\cX$ into $2|U|$ non-negative excursions parameterized by the elements of the set $\{-0,+0\}\times U$.
By $\sigma_n$, $n\geq 0$, denote 
the time instant of the last visit of the random walk $\cZ$ to $H_0$ before time $n$, i.e.,\
\ba
\sigma_n:=\max\Big\{k\leq n \colon \cX(k)\in \{- 0,+0\}\Big\},\quad \sigma_0=0.
\ea
Consider $2|U|$ processes 
\ba
\label{eq:X_j_notation}
X^{j,+}(n)&:= X(n)\1(Y(\sigma_n)\equiv  j, X(n)>0),\\ 
X^{j,-}(n)&:= |X(n)|\1(Y(\sigma_n) \equiv  j, X(n)<0), \quad j\in U,
\ea
and introduce a $2|U|$-dimensional 
Markov chain $\{(X^{j,+}(n), X^{j,-}(n))_{j\in U}\}_{n\geq 0}$ on $\mathbb N_0^{2|U|}$.  
Observe that
all its coordinates are non-negative, only one coordinate
of the vector $ (X^{j,+}(n), X^{j,-}(n))_{j\in U}$ may be non-zero, and  $X^{j,\pm }(\sigma_n)=0$.

To study the limit behavior of the scaled process $X_n(\cdot)$ defined in \eqref{e:XY} we will prove that the properly scaled $2|U|$-dimensional process
$ (X^{j,+}(n), X^{j,-}(n))_{j\in U}$
converges to a WBM with parameters $\{\pi_{\{+ 0\}\times j},\pi_{\{- 0\}\times j}\}_{j\in U}$.
To this end, 
for each $j\in U$ we decompose the processes $X^{j,+}$ and $X^{j,-}$ into a sum of a martingale and a ``local time in 0'', namely
we set
\ba
\label{eq:X_j_representation}
X^{j,\pm}(n)&=\sum_{k=1}^{n} (X^{j,\pm}(k)-X^{j,\pm}(k-1))  \1(X^{j,\pm}(k-1)>0) \\
&+ \sum_{k=1}^{n}\1(X^{j,\pm}(k-1)=0, X^{j,\pm}(k)=1)\\
&=: M^{j,\pm}(n) + L^{j,\pm}(n),\qquad M^{j,\pm}(0) =L^{j,\pm}(0)=0.
\ea
Since the original process $Z$ is a symmetric random walk outside of $H$, 
it follows from the construction that 
\ba
\E \Big[   (X^{j,\pm}(k) & -X^{j,\pm}(k-1))  \1(X^{j,\pm}(k-1)>0)    \Big| \rF_{k-1} \Big]\\
&= \1(X^{j,\pm}(k-1)>0)\cdot  \E \Big[ X^{j,\pm}(k)-X^{j,\pm}(k-1)  \Big| \rF_{k-1} \Big]=0
\ea
and the sequences $(M^{j,+}(n))_{n\geq 0}$ and $(M^{j,-}(n))_{ n\geq 0}$ are martingales for any $j\in U$
 with respect to filtration $(\rF_n)_{n \geq 0}$ generated by $Z$.
Moreover, since
\ba
\langle M^{j,\pm}  \rangle_n&=\sum_{k=1}^n \E \Big[   (X^{j,\pm}(k)-X^{j,\pm}(k-1))^2 \cdot \1(X^{j,\pm}(k-1)>0)   \Big| \rF_{k-1} \Big]\\
&= \frac1m\sum_{k=1}^n    \1(X^{j,\pm}(k-1)>0) ,
\ea
the sequences 
\ba
(M^{j,\pm}(n))^2- \frac1m \sum_{k=1}^{n} \1(X^{j,\pm}(k-1)>0),\quad n\geq 0,
\ea
are martingales too.

  Set
\ba
\label{e:scaled}
X_{n}^{j,\pm}(t):= \frac{X^{j,\pm}([nt])}{\sqrt n}, \quad  
M_{n}^{j,\pm}(t):= \frac{M^{j,\pm}([nt])}{\sqrt n}, \quad \nu_{n}^{j,\pm}(t):= \frac{L^{j,\pm}([nt])}{\sqrt n}.
\ea

\begin{prp}
\label{p:WBM}
There are continuous processes $\{X_j^\pm\}_{j\in U}$, continuous
martingales $\{M_j^\pm\}_{j\in U}$ and a nondecreasing process $\nu$ such that
\ba
\label{eq:Walsh}
(X_{n}^{j,\pm} , M_{n}^{j,\pm} , \nu_{n}^{j,\pm} )_{j\in U} 
\Rightarrow (X_j^{\pm} , M_j^\pm , \pi^\pm_j \nu )_{j\in U} ,\quad n\to\infty.
\ea
Moreover, the process $\sqrt{m} (X_j^{\pm} , M_j^\pm , \pi^\pm_j \nu )_{j\in U}$
 satisfies conditions of Theorem \ref{Walsh}.
\end{prp}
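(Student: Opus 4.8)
The plan is to prove the weak convergence \eqref{eq:Walsh} by a tightness-and-identification scheme, recognising every subsequential limit through the martingale characterisation of Theorem~\ref{Walsh}. Throughout I write $R(n):=\sum_{j\in U}\bigl(X^{j,+}(n)+X^{j,-}(n)\bigr)=|X(n)|$ for the radial part; it is a nearest-neighbour Markov chain on $\mbN_0$ (reflected at $0$, lazy at positive sites, hence recurrent), and $N(m):=\#\{0\le k<m:R(k)=0\}$ is its number of visits to $0$, i.e.\ the number of visits of $\cZ$ to $\cH$ before time $m$.

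\emph{Tightness.} Each $M_n^{j,\pm}$ has $\langle M_n^{j,\pm}\rangle_t=\frac1{mn}\sum_{k=1}^{[nt]}\1(X^{j,\pm}(k-1)>0)\le t/m$ and jumps $\le1/\sqrt n$, so $\{M_n^{j,\pm}\}_n$ is $C$-tight by the martingale functional CLT. The process $\nu_n^{j,\pm}$ is nondecreasing with $\nu_n^{j,\pm}(0)=0$, jumps $\le1/\sqrt n$, and $0\le\nu_n^{j,\pm}(t)\le\nu_n(t):=\sum_{j,\pm}\nu_n^{j,\pm}(t)=N([nt])/\sqrt n$; since $\P(R(k)=0)=O(k^{-1/2})$ by a local limit estimate we get $\E\nu_n(t)=O(\sqrt t)$, so the marginals of $\nu_n^{j,\pm}$ are tight and $\{\nu_n^{j,\pm}\}_n$ (a tight family of nondecreasing processes with vanishing jumps) is $C$-tight. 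With $X_n^{j,\pm}=M_n^{j,\pm}+\nu_n^{j,\pm}$ the whole family is $C$-tight; I fix a subsequence along which it converges, jointly with $(\nu_n,R_n)$, $R_n(t):=R([nt])/\sqrt n$, to a continuous limit $\bigl((X_j^\pm,M_j^\pm,\nu^{j,\pm})_{j\in U},\nu,R\bigr)$, and pass to a Skorokhod coupling so that the convergence is almost sure, uniform on compacts.

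\emph{Identifying the limit.} Passing to the limit in $X_n^{j,\pm}\ge0$ and $X_n^{j,\pm}X_n^{j',\pm'}\equiv0$ (distinct labels) gives condition~a) of Theorem~\ref{Walsh} and $R=\sum_{j,\pm}X_j^\pm$. As $\nu_n$ increases only at instants where $R_n=1/\sqrt n$, for every continuous $\phi\ge0$ vanishing near $0$ one has $\int_0^T\phi(R_n)\,\di\nu_n=0$ for large $n$; letting $n\to\infty$ (with $R_n\to R$ and $\nu_n\to\nu$ uniformly) gives $\int_0^\infty\1(R(s)>0)\,\di\nu(s)=0$, which with $\nu(0)=0$ is condition~b). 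Since $\Delta M^{j,\pm}(k)\,\Delta M^{j',\pm'}(k)\equiv0$ for distinct labels (at most one coordinate is positive) and $\sum_{j,\pm}\langle M_n^{j,\pm}\rangle_t=\frac1m\bigl(\tfrac{[nt]}{n}-\tfrac{\nu_n(t)}{\sqrt n}\bigr)\to t/m$, using uniform integrability of $\{(M_n^{j,\pm}(t))^2\}_n$ the limit $\beta:=\sqrt m\sum_{j,\pm}M_j^\pm=\sqrt m(R-\nu)$ is a continuous martingale with $\langle\beta\rangle_t=t$, i.e.\ a standard Brownian motion. Since $R\ge0$, $R(0)=0$ and $\nu$ is nondecreasing with $\nu(0)=0$ increasing only on $\{R=0\}$, Skorokhod's reflection lemma forces $R(t)=\tfrac1{\sqrt m}\bigl(\beta(t)-\inf_{s\le t}\beta(s)\bigr)$, so $R$ is a reflecting Brownian motion and spends zero Lebesgue time at $0$: condition~d).

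\emph{The shared local time, the brackets, and the conclusion.} The structural fact is that $L^{j,\pm}([nt])$ equals the number of visits $\tau_k<[nt]$ of $\cZ$ to $\cH$ with $\hat\cY(k)=(\pm0,j)$, whence $\nu_n^{j,\pm}(t)=\tfrac1{\sqrt n}\sum_{k=0}^{N([nt])-1}\1(\hat\cY(k)=(\pm0,j))$; applying \eqref{eq:statPi} for the finite irreducible chain $\hat\cY$ at the a.s.\ divergent random index $N([nt])$ yields $\sup_{t\le T}\bigl|\nu_n^{j,\pm}(t)-\pi_{\{\pm0\}\times j}\nu_n(t)\bigr|\le\varepsilon_n\nu_n(T)$ with $\varepsilon_n\to0$ a.s., so (as $\nu_n(T)$ is tight) this vanishes in probability and $\nu^{j,\pm}=\pi_{\{\pm0\}\times j}\,\nu$. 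Each $M_j^\pm$ is a continuous square-integrable martingale (limit, with uniform integrability, of the $L^2$-bounded $M_n^{j,\pm}$) with $M_j^\pm=X_j^\pm-\pi_{\{\pm0\}\times j}\nu$ and vanishing cross-brackets; approximating $\1(\cdot>0)$ from below by $f_\delta(x)=\min(x/\delta,1)$, the bound $\1(X_n^{j,\pm}>0)-f_\delta(X_n^{j,\pm})\le\1(0<R_n<\delta)$ together with the fact that $R$ spends no time at $0$ gives $\langle M_n^{j,\pm}\rangle_t\to\tfrac1m\int_0^t\1(X_j^\pm(s)>0)\,\di s=\langle M_j^\pm\rangle_t$, which is the bracket in condition~c). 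Hence $\sqrt m(X_j^\pm,M_j^\pm,\pi_{\{\pm0\}\times j}\nu)_{j\in U}$ meets conditions a)--d) of Theorem~\ref{Walsh} with weights $p_{(j,\pm)}=\pi_{\{\pm0\}\times j}>0$, so it is a Walsh Brownian motion with those parameters and local time $\sqrt m\,\nu$; by uniqueness of its law the subsequential limit is independent of the subsequence, proving \eqref{eq:Walsh} together with the asserted properties. The main obstacle is the homogenisation identity $\nu^{j,\pm}=\pi_{\{\pm0\}\times j}\nu$: it requires upgrading the ergodic theorem for $\hat\cY$ to a form uniform in time and stable under the diffusive rescaling — namely that the infinitely many (predominantly microscopic) excursions between consecutive membrane visits split among the $2|U|$ types in exactly the stationary proportions; the occupation-time convergence identifying the individual brackets for the a~priori only subsequential limit is the secondary difficulty.
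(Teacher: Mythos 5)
Your proposal follows essentially the same scheme as the paper: tightness, passage to a subsequence with a Skorokhod coupling, verification of conditions a)--d) of Theorem~\ref{Walsh} for each continuous limit point, identification of the fractions $\nu^{j,\pm}=\pi^\pm_j\nu$ via the strong law of large numbers for the embedded chain $\hat\cY$ (this is the crux, and you carry it out exactly as the paper does), and finally the bracket identity $\langle M_j^\pm\rangle_t=\frac1m\int_0^t\1(X_j^\pm(s)>0)\,\di s$ via occupation-time convergence. For this last step you sandwich the indicator by the continuous functions $f_\delta$, whereas the paper uses a Skorokhod coupling plus pointwise convergence of the indicators, Fubini, and dominated convergence; this is a valid alternative that does not change the substance.

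There is, however, one genuine gap, namely in the tightness argument for the local-time components $\nu_n^{j,\pm}$. You assert that a family of nondecreasing processes with tight marginals and vanishing jumps is $C$-tight. As a general principle this is false: $a_n(t)=\min(nt,1)$ has tight marginals, zero jumps, and is not tight in $D$. The moment bound $\E\,\nu_n(t)=O(\sqrt t)$ controls only a first moment and does not yield a uniform modulus of continuity. What you actually need is the joint functional convergence of the rescaled reflected walk \emph{together with} its local time, $(S([n\cdot])/\sqrt n,\,L([n\cdot])/\sqrt n)\Rightarrow(|B|,L^0)/\sqrt m$, which is exactly the content of the paper's Lemma~\ref{lem:refl}. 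Granting that, the pathwise inequality $\nu_n^{j,\pm}(t)-\nu_n^{j,\pm}(s)\le\nu_n(t)-\nu_n(s)$ for $s\le t$ (valid because the complementary sum $\sum_{(k,\mathfrak{s})\neq(j,\pm)}\nu_n^{k,\mathfrak{s}}$ is also nondecreasing) dominates the modulus of continuity of each $\nu_n^{j,\pm}$ by that of $L([n\cdot])/\sqrt n$, which is $C$-tight. With this repair your argument goes through; the remainder of your verification of a)--d), and in particular the homogenisation step producing $\nu^{j,\pm}=\pi^\pm_j\nu$, matches the paper's reasoning.
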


To prepare the proof of Proposition \ref{p:WBM} we notice that  in problems of this type it is often helpful to start with the 
study of the radial process
\ba
S(n):=\sum_{j\in U} (X^{j,-}(n)+X^{j,+}(n)), \ n\geq 0.
\ea
The process $S=\{S(n)\}_{n\geq 0}$ is a Markov chain on $\mbN\cup\{0\}$
with reflection at $0$ whose steps in $\mbN$ have the distribution
\ba
&\P\Big(S(n+1)=j-1 \Big| S(n)=j\Big) =\P\Big(S(n+1)=j+ 1 \Big| S(n)=j\Big)=\frac{1}{2m}, \\
&\P\Big(S(n+1)=j  \Big| S(n)=j\Big) =1- \frac{1}{m}, \quad j\geq 1,\\
&\P\Big(S(n+1)=1 \Big| S(n)=0\Big) =1.
\ea
Set
\bel{eq:L}
L(n):=\sum_{k=0}^{n-1} \1(X(k)=0)= \sum_{k=0}^{n-1} \1(S(k)=0)=\sum_{j\in U}(L^+_j(n) +L^-_j(n))
,\quad  n\geq 0,
\ee
$L(n)$ being the number of visits of $S$ to the origin
up to time $n$.
\begin{lem}\label{lem:refl}
We have the weak convergence
\ba
\Big(\frac{S([n \cdot ])}{\sqrt{n}},  \frac{L([n \cdot ])}{\sqrt{n}}\Big)
\Rightarrow \frac{1}{\sqrt m}
\Big( |B(\cdot)|,L^0(\cdot ) \Big), \ n\to\infty,
\ea 
where $B$ is a standard Brownian motion, $L^0(\cdot)$ is a local time of $B$ at 0 defined by \eqref{e:loctime}.
\end{lem}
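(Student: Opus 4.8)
The plan is to recognize $S$ as a lazy reflected random walk and to reduce the statement to the classical Donsker invariance principle together with a functional law for the number of visits to the origin. First I would observe that the process $S$ is exactly a simple symmetric random walk on $\mbN_0$ with reflection at $0$, slowed down by a factor $1/m$: with probability $1/m$ it attempts a $\pm 1$ step (each with probability $1/(2m)$), and otherwise it stands still, except at $0$ where it is pushed to $1$. Equivalently, $S$ can be obtained from a reflected simple symmetric random walk $\bar S$ on $\mbN_0$ by a geometric time change: if $T(n)$ denotes the number of non-lazy steps among the first $n$ steps of $S$, then $S(n)=\bar S(T(n))$, and by the strong law of large numbers $T([n\cdot])/n\to (\cdot)/m$ a.s., uniformly on compacts. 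Donsker's theorem (or the reflection-map continuity, since $\bar S$ is the Skorokhod reflection of an unrestricted symmetric walk) gives $\bar S([n\cdot])/\sqrt n\Rightarrow|B(\cdot)|$, and composing with the deterministic time change yields $S([n\cdot])/\sqrt n\Rightarrow|B(\cdot/m)|\stackrel{d}{=}\frac{1}{\sqrt m}|B(\cdot)|$.

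Next I would handle the local-time term $L$. Here I would use the representation of $L(n)$ as the number of visits of $S$ to $0$ up to time $n$, i.e.\ $L(n)=\sum_{k=0}^{n-1}\1(S(k)=0)$, and relate it to the local time at $0$ of the reflected walk $\bar S$ through the same time change. For a reflected symmetric simple random walk it is classical (a consequence of the invariance principle for local times, or of the L\'evy-type identity $\bar S=\bar S- \min \bar S'$ expressing the reflected walk and its local time jointly in terms of an unrestricted walk) that $\frac{1}{\sqrt n}\#\{k<n:\bar S(k)=0\}\Rightarrow L^0(\cdot)$ jointly with $\bar S([n\cdot])/\sqrt n\Rightarrow|B(\cdot)|$; here I would be careful about the normalization constant of the limiting local time, matching the definition \eqref{e:loctime} which uses the factor $1/(2\e)$. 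After the time change $T([n\cdot])/n\to(\cdot)/m$, the number of visits of $S$ to $0$ before time $[nt]$ equals the number of visits of $\bar S$ to $0$ before time $T([nt])\approx nt/m$, so $L([n\cdot])/\sqrt n\Rightarrow L^0(\cdot/m)=\frac{1}{\sqrt m}L^0(\cdot)$, jointly with the convergence of $S$, because both are continuous functionals of the same pair (walk, time change).

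I expect the main obstacle to be the joint convergence of $(S,L)$ in the Skorokhod space, together with getting the constants exactly right. The marginal convergences are standard, but one must argue that the pair converges jointly; the cleanest route is to exhibit both $S([n\cdot])/\sqrt n$ and $L([n\cdot])/\sqrt n$ as images of a single underlying object --- for instance an unrestricted symmetric random walk $V$ via $\bar S(k)=V(k)-\min_{j\le k}(V(j)\wedge 0)$ and $\#\{k<n:\bar S(k)=0\}$ comparable to $-\min_{j<n}(V(j)\wedge 0)$ (the discrete L\'evy identity, valid up to a bounded error) --- composed with the a.s.\ convergent time change $T$. Continuity of the Skorokhod reflection map and of composition then delivers joint convergence by the continuous mapping theorem, and the slowdown by $1/m$ accounts for the overall $1/\sqrt m$ scaling. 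The only genuinely delicate point is that the discrete ``number of visits to $0$'' must be shown to have the same scaling limit as the semimartingale local time of $|B|$ as normalized in \eqref{e:loctime}; this is a known fact for the symmetric simple walk and I would cite it (e.g.\ via R\'ev—Yor, Chapter VI, or the invariance principle for local times) rather than reprove it.
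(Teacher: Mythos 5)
Your overall strategy is the same as the paper's: reduce the lemma to joint convergence of a reflected walk and its local time via the Skorokhod reflection map and L\'evy's identity, with the $1/\sqrt m$ factor produced by a time change that strips out the lazy steps. The paper likewise constructs a slowed-down unrestricted walk $Q$, passes to $\tilde Q=Q+L^Q$ by Skorokhod reflection, and then collapses consecutive zeros by a second time change that it argues is negligible on the $\sqrt n$ scale.

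There is, however, a concrete gap in your treatment of the local time. You set $\bar S(k)=V(k)-\min_{j\le k}(V(j)\wedge 0)$, the Skorokhod reflection of an unrestricted symmetric walk $V$, and then use both the identity $S(n)=\bar S(T(n))$ and the claim that $\#\{k<n:\bar S(k)=0\}$ equals $-\min_{j<n}(V(j)\wedge 0)$ up to bounded error. Neither is correct. When $\bar S(k)=0$ the process $\bar S$ stays at $0$ with probability $1/2$ (namely when $V$ takes a downward step to a new running minimum), whereas $S$, and hence its de-lazified version, leaves $0$ with probability $1$; so $S$ is not obtained from $\bar S$ by merely deleting the lazy steps away from $0$. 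More seriously, each running-minimum level of $V$ is visited a Geometric$(1/2)$ number of times (mean $2$) before $V$ drops lower, so $\#\{k<n:\bar S(k)=0\}\approx 2\big(-\min_{j<n}V(j)\big)$ and $\frac{1}{\sqrt n}\#\{k<n:\bar S(k)=0\}\Rightarrow 2L^0(\cdot)$, not $L^0(\cdot)$. The quantity that does converge to $L^0$ on the $\sqrt n$ scale is the number of \emph{blocks} of consecutive zeros of $\bar S$ (equivalently, the number of transitions $0\to1$), and that is precisely what corresponds to $L(n)=\sum_{k<n}\1(S(k)=0)$ after each block is collapsed to a single step. This is exactly the step the paper makes explicit when it introduces the second time change that pushes $\tilde Q$ from zero with probability one and shows it is an $O(\sqrt n)$ correction. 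Once you replace the visit count of $\bar S$ by its block count, your plan goes through and coincides with the paper's argument; as written, the normalization of the limiting local time would come out off by a constant factor.
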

\begin{proof}
Whereas the convergence of the reflected random walks to a reflected Brownian motion is straightforward, certain 
work should be done to ensure the convergence of the local times.
A similar result in a more general setting can be found in Section 2 of Pilipenko and Prykhodko \cite{pilipenko2014jump} so that here we just briefly 
outline the argument.

First we construct an auxiliary slowed-down one-dimensional symmetric random walk $Q$ on $\mbZ$ with steps $\pm 1$ and $0$, namely we set
\ba
\P\Big(Q(n+1)&=j- 1 \Big| Q(n)=j\Big) =\P\Big(Q(n+1)=j+1 \Big| Q(n)=j\Big) =\frac{1}{2m}, \\
\P\Big(Q(n+1)&=j  \Big| Q(n)=j\Big)  =1- \frac{1}{m}, \quad  j\in\mbZ.
\ea
Clearly, by the functional central limit theorem, 
\ba
\label{e:Q}
Q([n\cdot ])/\sqrt n \Rightarrow B(\cdot)/\sqrt m,\quad n\to\infty. 
\ea
Then we construct a copy of the reflected random walk $S$ on the basis of the random walk $Q$. To this end, 
we write the solution of the Skorokhod reflection problem for $Q$,
\ba
\tilde Q(n)=Q(n)+ L^Q(n),
\ea
where
\ba
L^Q(n)=-\min_{0\leq k\leq n} (Q(k)\wedge 0).
\ea
Then $\tilde Q(n)\geq 0$. Since $\tilde Q$ may spend several consecutive steps at the origin, we perform a random time 
transformation with the help of the local time $L^Q$ to obtain a copy of a Markov chain $S$. Essentially we push the
the process $\tilde Q$ from zero with probability one. Since the number of visits of zero on the time interval $[0,n]$ is of the order 
$\sqrt n$ this random time transformation (properly rescaled) converges to identity. Since the reflection mapping is continuous, 
from the convergence \eqref{e:Q} we obtain convergence of the rescaled constructed copy of $S$ and the local time $L^Q$ to the solution of the
Skorokhod reflection problem for $B/\sqrt m$, namely to $(\max_{s\leq t}B_s-B_t, \max_{s\leq t}B_s)/\sqrt m$, which 
has the same law as $(|B(\cdot)|, L^0(\cdot))/\sqrt m$ by L\'evy's theorem, see, e.g.\ \cite[Corollary 19.3]{Kallenberg-02}. 
\end{proof}

\begin{lem}
\label{lem:compactness}
The sequence $\{(X^{j,\pm}_n(\cdot), M^{j,\pm}_n(\cdot), \nu^{j,\pm}_n(\cdot))_{j\in U}\}_{n\geq 1} $
is weakly relatively
 compact in $D([0,T], \mbR^{6|U|})$ and any limit point is a continuous processes.
\end{lem}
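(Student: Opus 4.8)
The plan is to reduce the $C$-tightness of the full $\mbR^{6|U|}$-valued family to that of the two-dimensional radial family $\ov S_n(t):=S([nt])/\sqrt n$, $\ov L_n(t):=L([nt])/\sqrt n$, which is already provided by Lemma~\ref{lem:refl}. Recall that a sequence $(V_n)$ in $D([0,T],\mbR)$ is $C$-tight provided the suprema $\sup_{t\le T}|V_n(t)|$ are tight and the uniform moduli of continuity $w_T(V_n,\delta):=\sup\{|V_n(t)-V_n(s)|\colon s,t\in[0,T],\,|t-s|\le\delta\}$ satisfy $\lim_{\delta\to0}\limsup_n\P(w_T(V_n,\delta)>\varepsilon)=0$ for each $\varepsilon>0$; moreover, since these conditions involve only the uniform norm and the uniform modulus, a vector-valued sequence is $C$-tight as soon as every coordinate is (see, e.g., \cite{Kallenberg-02}). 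So it is enough to dominate, pathwise and uniformly in $n$, the suprema and the moduli of each of $X^{j,\pm}_n$, $M^{j,\pm}_n$, $\nu^{j,\pm}_n$ by those of $\ov S_n$ and $\ov L_n$.

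First I would record two elementary pathwise facts about the discrete chain. By construction $S(k)=|X(k)|$, and for every $k$ exactly one of the $2|U|$ coordinates $X^{j,\pm}(k)$ equals $S(k)$ while all the others vanish; the ``active'' index $(j,\pm)$ stays constant along each excursion of $\cX$ away from $\cH$ and can change only at an instant $k$ with $\cX(k)\in\cH$, that is, with $S(k)=0$. Hence $X^{j,\pm}(k)\in\{0,S(k)\}$, and along any stretch of time the value of $X^{j,\pm}_n$ can pass from the ``$\ov S_n$'' regime to the ``$0$'' regime (or back) only through a zero of $\ov S_n$. A short case distinction then gives, for all $0\le s\le t\le T$,
\[
0\le X^{j,\pm}_n(t)\le \ov S_n(t),\qquad
|X^{j,\pm}_n(t)-X^{j,\pm}_n(s)|\le\sup_{u,v\in[s,t]}|\ov S_n(u)-\ov S_n(v)|,
\]
so that $\sup_{t\le T}X^{j,\pm}_n(t)\le\sup_{t\le T}\ov S_n(t)$ and $w_T(X^{j,\pm}_n,\delta)\le w_T(\ov S_n,\delta)$. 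For the local-time terms, $\nu^{j,\pm}_n$ is nondecreasing with $\nu^{j,\pm}_n(0)=0$, and by \eqref{eq:L} the increments of $L^{j,\pm}$ are dominated by those of $L$, whence $0\le\nu^{j,\pm}_n(t)-\nu^{j,\pm}_n(s)\le\ov L_n(t)-\ov L_n(s)$ for $s\le t$; this yields $\sup_{t\le T}\nu^{j,\pm}_n(t)\le\sup_{t\le T}\ov L_n(t)$ and $w_T(\nu^{j,\pm}_n,\delta)\le w_T(\ov L_n,\delta)$. Finally, since $M^{j,\pm}_n=X^{j,\pm}_n-\nu^{j,\pm}_n$ by \eqref{eq:X_j_representation}, the supremum and the modulus of $M^{j,\pm}_n$ are bounded by the sums of the corresponding quantities for $X^{j,\pm}_n$ and $\nu^{j,\pm}_n$.

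It remains to invoke Lemma~\ref{lem:refl}: $(\ov S_n,\ov L_n)\Rightarrow\frac1{\sqrt m}(|B|,L^0)$, and since this limit is continuous the pair $(\ov S_n,\ov L_n)$ is $C$-tight; in particular $\sup_{t\le T}\ov S_n(t)$ and $\sup_{t\le T}\ov L_n(t)$ are tight and $\lim_{\delta\to0}\limsup_n\P\big(w_T(\ov S_n,\delta)+w_T(\ov L_n,\delta)>\varepsilon\big)=0$ for every $\varepsilon>0$. Together with the pathwise bounds above this shows that each of the $6|U|$ coordinate sequences satisfies the two $C$-tightness conditions, hence the family $\{(X^{j,\pm}_n,M^{j,\pm}_n,\nu^{j,\pm}_n)_{j\in U}\}_{n\ge1}$ is $C$-tight in $D([0,T],\mbR^{6|U|})$, i.e.\ weakly relatively compact with all limit points supported on continuous paths. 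The one step needing genuine care is the oscillation bound for the $X$-coordinates, namely verifying in the case analysis that a change of the active index inside a time interval forces $\ov S_n$ to return to $0$ inside that interval; everything else is routine.
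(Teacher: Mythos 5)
Your proof is correct and uses the same key ingredient and strategy as the paper: control all $6|U|$ coordinates pathwise by the radial pair $\big(S([n\cdot])/\sqrt n,\,L([n\cdot])/\sqrt n\big)$ and then invoke the $C$-tightness supplied by Lemma~\ref{lem:refl}. The only (cosmetic) difference is bookkeeping: the paper bounds the modulus of $M^{j,\pm}_n$ by that of $X^{j,\pm}_n$ and recovers $\nu^{j,\pm}_n$ as their difference, whereas you bound $\nu^{j,\pm}_n$ directly by $L([n\cdot])/\sqrt n$ (immediate from \eqref{eq:L}, a sum of nonnegative nondecreasing terms) and recover $M^{j,\pm}_n$ as the difference, which is if anything cleaner and yields a sharper constant in the bound on the $X$-coordinates.
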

\begin{proof}
Notice that the modulus of continuity (in the uniform topology)
of any $X^{j,\pm}_n$ is dominated by doubled modulus of continuity of 
$\{S(n\cdot)/\sqrt{n}\}$, and 
the modulus of continuity 
of any $M^{j,\pm}_n$ is dominated by doubled modulus of continuity of 
$X^{j,\pm}_n$.
The third coordinate is the difference of the first two. Hence the proof of the Lemma follows from Lemma \ref{lem:refl}.
\end{proof}

\medskip
\noindent
\emph{Proof of Proposition \ref{p:WBM}}.
To show convergence $\{(X_n^{j,\pm})_{j\in U}\}_{n\geq 0}$ to the WBM it suffices 
to verify that for 
any subsequence $\{ (X_{n_k}^{j,\pm})_{j\in U}  \}_{k\geq 0}$  there is a subsubsequence that converges to the WBM.
Due to Lemma \ref{lem:compactness} without loss of generality we will assume that the sequence 
$\Big\{\Big(X^{j,\pm}_n(\cdot), M^{j,\pm}_n(\cdot), \nu^{j,\pm}_n(\cdot)\Big)_{j\in U} \Big\}_{n\geq 1} $
converges in distribution  to a continuous process 
$\Big(X^{j,\pm}(\cdot), M^{j,\pm}(\cdot), \nu^{j,\pm}(\cdot)\Big)_{j\in U}$.

Let us check the conditions a)--d) of Theorem~\ref{Walsh} for the process $\sqrt{m}\Big(X^{j,\pm}(\cdot), M^{j,\pm}(\cdot), \nu^{j,\pm}(\cdot)\Big)_{j\in U}$.

\medskip

\noindent 
a) It follows from the construction that $X^{j,\pm}(t)\geq 0$, $t\in[0,T]$. Moreover, 
 only one of these processes may be positive at any fixed time.
 
\medskip

\noindent  
b) and d) The processes $\nu^{j,\pm}(\cdot)$ are non-decreasing a.s.\ and $\nu^{j,\pm}(0)=0$.
Lemma \ref{lem:refl} yields that
\ba
\Big(|X(\cdot)|, \nu(\cdot)\Big) 
\stackrel{\di}{=}
\frac{1}{\sqrt{m}}
\Big( |B(\cdot)|,  L^0(\cdot)\Big),
\ea
where
\ba
\nu(t)= \sum_{j\in U}(\nu^{j,-}(t)+ \nu^{j,+}(t)).
\ea
Since $\int_0^T \1(B(t)=0)\, \di t =0$ and $\int_0^T \1(|B(t)|>0) \,\di L^0(t)=0$ a.s.\ for any $T>0$, we have 
\ba
&\int_0^T \1(|X(t)|=0)\,  \di t =0,\\
&\int_0^T \1(|X(t)|>0)\,  \di \nu(t)=0
\ea 
almost surely.

\medskip

\noindent  
c) It follows from the construction that
\ba
X^{j,\pm}(t)=M^{j,\pm}(t)+ \nu^{j,\pm}(t)\ \text{a.s.}
\ea
for all $j\in U, t\geq 0$.
To show that 
\ba
\label{eq:429}
\nu^{j,\pm}(t)=\pi^\pm_j \nu(t)\ \text{a.s.}
\ea
we recall the strong law of large numbers \eqref{eq:statPi} for Markov chains. 
For any $t>0$ the process $L([nt])$ defined in \eqref{eq:L} increases to $+\infty$ a.s.\ 
as $n\to\infty$. Hence 
\ba
\frac{\nu_{n}^{j,\pm}(t)}{\sum_{k\in U}\nu_{n}^{k,\pm}(t)}=
\frac{1}{L([nt])}\sum_{k=0}^{L([n t])}\1(\hat \cY(k)= \{\pm 0\} \times j)\to \pi^\pm_j  \ \text{a.s.}
\ea
where   $\hat \cY$ is defined in  \eqref{e:hatY}.
 
The processes $M_{n}^{j,\pm}(\cdot)$, $j\in U$, are local martingales  with respect to 
the filtration generated by $\{X_{n}^{j,\pm}(\cdot), M_{n}^{j,\pm}(\cdot)\}_{j\in U}$.
Since the jumps of each $M_{n}^{j,\pm}(\cdot)$ are uniformly bounded, the limits $M^{j,\pm}$ are local martingales with respect to 
filtration generated by $\{X^{j,\pm}, M^{j,\pm}\}_{k\in U}$ due
to Lemma 1.17 in Chapter IX of Jacod and Shiryaev \cite{JacodS-03}. Moreover, the limit processes are continuous due to Lemma \ref{lem:compactness}.

It is left to show that
\ba
\label{eq:433}
\langle M^{j,\pm}\rangle_t= \frac{1}{  m}\int_0^t \1(X^{j,\pm}(s)>0) \,\di s \ \text{a.s.\ for }  j\in U,\ t\geq 0.
\ea
By Skorokhod's representation theorem there is a probability space and 
the copies
\ba
\Big\{\Big(\tilde X^{j,\pm}_n(\cdot),\tilde  M^{j,\pm}_n(\cdot),\tilde  \nu^{j,\pm}_n(\cdot)\Big)_{j\in U}\Big\}_{n\geq 1}
\text{ and }
\Big\{\Big(\tilde X^{j,\pm}(\cdot),\tilde  M^{j,\pm}(\cdot), \tilde  \nu^{j,\pm}(\cdot)\Big)_{j\in U}\Big\}
\ea
of 
\ba
\Big\{\Big(X^{j,\pm}_n(\cdot), M^{j,\pm}_n(\cdot), \nu^{j,\pm}_n(\cdot)\Big)_{j\in U}\Big\}_{n\geq 1}\text{ and } 
\Big\{\Big(X^{j,\pm}(\cdot), M^{j,\pm}(\cdot), \nu^{j,\pm}(\cdot)\Big)_{j\in U}\Big\}
\ea
such that on any interval $[0,T]$
we have a.s.\ uniform convergence
\bel{eq:Skor}
  \Big(\tilde X^{j,\pm}_n(\cdot), \tilde  M^{j,\pm}_n(\cdot), \tilde  \nu^{j,\pm}_n(\cdot)\Big)_{j\in U}   
\to
 \Big(\tilde X^{j,\pm}(\cdot), \tilde M^{j,\pm}(\cdot), \tilde \nu^{j,\pm}(\cdot)\Big)_{j\in U},\quad  n\to\infty.
 \ee
To prove \eqref{eq:433} 
it suffices to verify that with probability 1 the sequence 
\ba
\Big(\tilde M^{j,\pm}_n(t)\Big)^2- \frac1m \int_0^{[nt]/n}  \1(\tilde X^{j,\pm}_n(s)>0)\,\di s
\ea
converges uniformly over $t\in [0,T]$ to 
\ba
(\tilde M^{j,\pm}(t))^2- \frac1m\int_0^{t}  \1(\tilde X^{j,\pm}(s)>0)\, \di s.
\ea
Here we again use Lemma 1.17 in Chapter IX of Jacod and Shiryaev \cite{JacodS-03} and a localization procedure.

It follows from \eqref{eq:Skor} that we have to prove the convergence of the integrals only. 

Let $\omega\in \Omega$ be such that \eqref{eq:Skor} holds. If $s\in[0,T]$ is such that $\tilde X^{k,\mathfrak{s}}(s)>0$ for some 
$1\leq k\leq |U|$ and 
$\mathfrak{s}\in\{-,+\}$
then    
$\tilde  X^{k,\mathfrak{s} }_n(s)>0$ for large $n$. Since only one of the processes
$\{\tilde X^{j,\mathfrak{s}}(s)\}_{j\in U,\mathfrak{s}\in\{-,+\}}$ and only one of 
$\{\tilde X^{j,\mathfrak{s}}_n(s)\}_{j\in U,\mathfrak{s}\in\{-,+\}}$  may be non-zero
we have
convergence of the indicators for all $j\in U$:
\bel{eq:7030}
\lim_{n\to\infty} \1(\tilde X^{j,\pm}_n(s)>0) = \1(\tilde X^{j,\pm}(s)>0).
\ee

The process $\tilde X= (\tilde X^{j,+},\tilde X^{j,-})_{j\in U}$ spends zero time in 0 with probability 1 because 
$\sqrt{m}\sum_{j=1}^{|U|}(\tilde X^{j,+}+\tilde X^{j,-} ) $ is a reflected Brownian motion, see
 Lemma \ref{lem:refl}. Therefore
for a.a.\ $\omega$ and a.a.\ $s\in[0,T]$ there is $k$ and $\mathfrak{s}$ such that $ \tilde X^{k,\mathfrak{s}}(s)>0$ 
and we have \eqref{eq:7030} for any index $(j,\pm)$, $j\in U$. So by the Fubini theorem and by the 
Lebesgue dominated convergence theorem for a.a.\ $\omega$ and all $j\in U$ we have convergence of the integrals
\ba
\lim_{n\to\infty} \int_0^t\1(\tilde X^{j,\pm}_n(s)>0)\,\di s = \int_0^t\1(\tilde X^{j,\pm}(s)>0)\,\di s.
\ea
 This completes the proof of Proposition \ref{p:WBM}.\hfill $\Box$
 
\medskip

To treat convergence of $Y$, similarly to the representation \eqref{eq:X_j_representation} for $\{X^{j,\pm}(n)\}$ we 
decompose the sequence $Y=\{Y(n)\}_{n\geq 0}$ into the sum
\ba
Y(n)&=\sum_{k=0}^{n-1} \Big(Y(k+1) -Y(k)\Big)\1(X(k)\neq 0)\\
&+\sum_{j \in U} \sum_{k=0}^{n-1} \Big(Y(k+1) -Y(k)\Big)\1(\cX(k)= \pm 0, Y(k)\equiv j)\\
&= M^Y(n) +D^Y(n),\quad n\geq 0,
\ea
and we define
\ba
M^Y_n(t):=\frac{M^Y([nt])}{\sqrt{n}},\quad  D^Y_n(t):=\frac{D^Y([nt])}{\sqrt{n}}.
\ea

The following Proposition is proven analogously to the previous reasoning.
\begin{prp}
\label{p:Y}
Let $X$ and $\nu$ be as in Proposition \ref{p:WBM}. 
Then
 \ba
\Big(X_n(\cdot), \nu_n(\cdot),  M^Y_n(\cdot), D^Y_n(\cdot)\Big)
\Rightarrow 
\Big(X(\cdot), \nu(\cdot),  \frac{1}{\sqrt m} W_Y(\cdot), c \nu(\cdot)\Big),
\ea
where $W_Y$ is a $(m-1)$-dimensional Brownian motion 
independent of $X$, and $c$ is defined in \eqref{e:c}.
\end{prp}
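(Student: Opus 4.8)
The plan is to mimic the proof of Proposition \ref{p:WBM}, treating the $Y$-process by its martingale/drift decomposition and invoking the convergence of the local time already established. First I would note that by Lemma \ref{lem:compactness} (whose argument carries over verbatim to the modulus of continuity of $M^Y_n$ and $D^Y_n$, each dominated by that of $S([n\cdot])/\sqrt n$ thanks to the boundedness of the one-step increments) the joint family $\{(X_n,\nu_n,M^Y_n,D^Y_n)\}_n$ is weakly relatively compact with continuous limit points, so it suffices to identify the limit of an arbitrary convergent subsequence. For the martingale part, observe that $M^Y_n$ is a square-integrable martingale with respect to the filtration generated by $Z$, with predictable quadratic variation
\ba
\langle M^Y_n\rangle_t = \frac1n\sum_{k=0}^{[nt]-1}\E\Big[(Y(k+1)-Y(k))(Y(k+1)-Y(k))^\top\1(X(k)\neq 0)\Big|\rF_k\Big]
= \frac{[nt]}{nm}\,\mathrm{Id}_{m-1} - \frac1n\sum_{k=0}^{[nt]-1}\1(X(k)=0)\,\mathrm{Id}_{m-1},
\ea
since off the membrane $Y$ increments are those of a simple symmetric random walk on $\mbZ^m$ conditioned to move in a $Y$-direction. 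The subtracted term is $L([nt])/(nm)\to 0$ by Lemma \ref{lem:refl}, so $\langle M^Y_n\rangle_t \to (t/m)\,\mathrm{Id}$; the cross-brackets $\langle M^Y_n, M^{j,\pm}_n\rangle$ and $\langle M^Y_n, X_n\rangle$ vanish in the limit because the $X$-increments and $Y$-increments are never simultaneously nonzero. By the martingale FCLT (Theorem VIII.3.11 in Jacod and Shiryaev \cite{JacodS-03}, or Lemma 1.17 of Chapter IX as used above for the localization), the limit $M^Y$ is a continuous martingale with bracket $(t/m)\mathrm{Id}$ independent of the limit of $(X_n,\nu_n)$, hence equals $W_Y/\sqrt m$ with $W_Y$ a standard $(m-1)$-dimensional Brownian motion independent of $X$.

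The heart of the matter is the drift term $D^Y_n$. Writing it in terms of the embedded chain $\hat\cY$ and the slide means, we have
\ba
D^Y([nt]) = \sum_{j\in U}\Big(\alpha_{+,j}\sum_{k=0}^{L([nt])-1}\1(\hat\cY(k)=\{+0\}\times j) + \alpha_{-,j}\sum_{k=0}^{L([nt])-1}\1(\hat\cY(k)=\{-0\}\times j)\Big) + R([nt]),
\ea
where $R$ collects the martingale fluctuations of the slide increments around their conditional means $\alpha_{\pm,j}$. Dividing by $\sqrt n$, the strong law of large numbers \eqref{eq:statPi} for the Markov chain $\hat\cY$ gives $\frac1{L([nt])}\sum_{k<L([nt])}\1(\hat\cY(k)=\{\pm0\}\times j)\to\pi_{\{\pm0\}\times j}$ a.s., while $L([nt])/\sqrt n = \sqrt m\,\nu_n(t)(1+o(1))\to\sqrt m\,\nu(t)$; combining,
\ba
\frac{D^Y([nt])}{\sqrt n} \to \sum_{j\in U}\big(\pi_{\{+0\}\times j}\alpha_{+,j}+\pi_{\{-0\}\times j}\alpha_{-,j}\big)\,\nu(t) = c\,\nu(t).
\ae
One must check the remainder $R([nt])/\sqrt n\to 0$: $R$ is a martingale whose bracket, up to constants, is bounded by (number of membrane visits) $\times$ (bound on the slide variance) $= O(L([nt])) = O(\sqrt n)$, so $R([nt])/\sqrt n\to 0$ in probability by Doob's inequality, using the assumed finiteness of the slide means together with the (implicit, and needed) finiteness of the slide second moments. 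Passing to a Skorokhod representation as in the proof of Proposition \ref{p:WBM} converts these a.s.\ statements into joint a.s.\ uniform convergence, and the four-tuple limit follows.

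The step I expect to be the main obstacle is making the convergence $D^Y_n\to c\nu$ \emph{joint} with the convergence of $(X_n,\nu_n,M^Y_n)$ rather than merely marginal: the ratio-ergodic argument naturally lives on the time scale of the embedded chain $\hat\cY$ (indexed by excursion number $L([nt])$), whereas the martingale FCLT lives on the original time scale, and one must argue that $L([n\cdot])/\sqrt n$ and the occupation fractions of $\hat\cY$ converge \emph{together} — which is why the Skorokhod coupling is invoked and why Lemma \ref{lem:refl} is stated jointly in $(S,L)$. Once that is in place the limiting identity $Y^c = cL + W_Y$ with the \emph{same} local time $L = \sqrt m\,\nu$ appearing in \eqref{eq:Skew} follows, and combining Proposition \ref{p:WBM}, Corollary \ref{c:SBM} (with $I$ the set of ``$+$''-indices, giving $\gamma$ as in \eqref{e:gamma}) and Proposition \ref{p:Y} completes the proof of Theorem \ref{thm:periodic_membrane}.
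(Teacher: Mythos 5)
Your overall plan — decompose $Y$ as martingale plus membrane drift, control the bracket of the martingale part by the occupation time off the membrane, identify the drift part via the SLLN for $\hat\cY$, and obtain joint convergence through the Skorokhod coupling — is precisely what the paper's one-line proof (``analogously to the previous reasoning'') is pointing at, and the bracket computation and independence-via-orthogonality argument are the right ingredients. Two points in the write-up are, however, not correct as stated, one of which is worth fixing.

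First, the tightness justification borrowed from Lemma \ref{lem:compactness} does not transfer to $M^Y_n$. The paper's argument there rests on the pathwise inequality $|X^{j,\pm}_n|\le S_n$ (and hence a dominance of moduli of continuity), which has no analogue for $M^Y_n$: when $X$ spends a long stretch away from $0$, $M^Y_n$ is an unconstrained $(m-1)$-dimensional random walk whose oscillations bear no relation to those of $S_n$. Tightness of $M^Y_n$ should instead come from the martingale structure — e.g.\ the bound $\langle M^Y_n\rangle_t-\langle M^Y_n\rangle_s\le (t-s)/m$ together with a Burkholder--Davis--Gundy or fourth-moment estimate, or directly from Theorem VIII.3.11 in Jacod--Shiryaev once the bracket convergence is in hand. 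For $D^Y_n$ the dominance is by $L_n$ (not $S_n$) times the slide size, which is also where the moment condition you flag enters.

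Second, a minor slip: the displayed bracket should read $\langle M^Y_n\rangle_t=\frac{[nt]-L([nt])}{nm}\,\mathrm{Id}_{m-1}$; the second term in your display is missing a factor $1/m$. This does not affect the conclusion since $L([nt])/n\to 0$.

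Your remark that finiteness of the slides' second moments is implicitly needed (to kill $R([nt])/\sqrt n$ and to get tightness of $D^Y_n$) is well taken: assumption \textbf{A}$_c$ as written only asks for finite means, and the argument does use more. Modulo these adjustments, the proposal is a faithful fleshing-out of the argument the paper has in mind.
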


\medskip
\noindent
\emph{Proof of Theorem \ref{thm:periodic_membrane}.} 
We combine Propositions \ref{p:WBM} and \ref{p:Y} together with Corollary \ref{c:SBM}.
\hfill  $\Box$

\section{One-sided membrane with ergodic properties \label{s:rm}}

The same method of decomposition of the perturbed Markov chain into a sum of excursions
 combined with the strong law of large numbers 
\eqref{eq:statPi} can be applied for the analysis of a one-sided membrane 
that has ergodic properties. 

As in Section \ref{s:twosided}, let $m\geq 2$ and let $\{\ex_1,\dots,\ex_m\}$ be a 
standard basis in $\mbR^m$. 
Consider a Markov chain $Z=(X,Y)$ on $\mbZ^m$ that behaves as 
a simple random walk outside of the hyperplane $H:=\{0\}\times \mbZ^{m-1}$, i.e.,
\ \eqref{eq:transition} holds true.

Let $\{p_y\}_{y\in H}\subset [0,1]$.
Now we interpret $H$ as a semipermeable non-homogeneous 
 membrane that may let a particle into one half-space with 
  probabilities $\{p_y\}_{y\in H}$. More precisely, we assume that
for each $z=(0,y)  \in H$:
\ba
p_y =\P \Big(Z(n+1)=z+\ex_1 \Big|  Z(n)=z\Big)=1-\P \Big(Z(n+1)=z-\ex_1\, \Big|
 \, Z(n)=z\Big).
\ea
Note that the particle leaves the membrane in the direction orthogonal to $H$, i.e.,\ $Y(n+1)=Y(n)$ for $Z(n)\in H$ with probability 1, and hence there is 
no slide along the membrane.

We assume that  the membrane has the following  ergodic property:

\noindent
\textbf{A}$_\text{SLLN}(\beta)$: there is $\beta>0$ such that 
\ba
\lim_{A\to\infty}  \frac{1}{|V(A,o)|}\sum_{y\in V(A,o)} p_y  = :\bar p\in[0,1],
\ea 
where the limit is taken over all cubes $V(A,o)\subseteq H$ of volume $|V(A,o)|$
with side size larger than $A$ and whose centre $o$ is within distance $A^\beta$ from the origin.

We give two clarifying examples for the assumption \textbf{A}$_\text{SLLN}(\beta)$. 

\begin{exa} 
Assume that the family $\{p_y\}_{y\in H}$ has a periodic structure: there are $k_2, \dots, k_m\geq 1$ such that 
for all  $l_2,\dots,l_m\in\mbZ$ and for all $y\in H$
\ba
p_y=p_{y+ k_2 l_2 \ex_2+\cdots +k_m l_m \ex_m}.
\ea
Then $\{p_y\}$ clearly satisfy assumption  \textbf{A}$_\text{SLLN}(\beta)$ for any $\beta>0$  with
\ba
\bar p= \frac{1}{k_2 \cdots k_m} \sum_{i_2,\dots,i_m=1}^{k_2,\dots,k_m} p_{(i_2,\dots,i_m)}. 
\ea
\end{exa}
\begin{exa} 
Let $\{p_y\}_{y\in H}$ be i.i.d.\ random variables with values in $[0,1]$ defined on a probability space $(\Omega', \rF', \P')$. Then for each fixed $\omega'\in\Omega'$
the family $\{p_y(\omega')\}$  defines a random ``environment''. Then
the assumption  \textbf{A}$_\text{SLLN}(\beta)$ with  $\beta>0$ is satified with
\ba
\bar p=\E' p_y.
\ea
To see this, let $\beta>0$, and let $V(A,o)$ denote a cube with the size $A\in\mbN$ and the centre at $o\in \mbZ^{m-1}$. Then
\ba
\label{e:prob}
\P\Big(  \frac{1}{|V(A,o)|}\sum_{y\in V(A,o)} p_y \not\to \bar p   \Big)
=\P\Big( \bigcup_{m=1}^\infty \bigcap_{k=1}^\infty\bigcup_{A=k}^\infty \bigcup_{|o|\leq L^\beta}
\Big\{
\frac{1}{|V(A,o)|}\Big|\sum_{y\in V(A,o)} (p_y-\bar p)\Big| >\frac{1}{m} 
\Big\}
\Big).
\ea
For each $m\geq 1$ and $A\geq 1$ we apply Hoeffding's inequality, see Chapter III, \S 5.8 in Petrov \cite{petrov75sums}:
\ba
\P\Big( \bigcup_{|o|\leq A^\beta}
\Big\{
\frac{1}{|V(A,o)|}&\Big|\sum_{y\in V(A,o)} (p_y-\bar p)\Big| >\frac{1}{m} 
\Big\}
\Big)
\leq 
(2A^\beta)^{m-1}
\P\Big(
\frac{1}{|V(A,o)|}\Big|\sum_{y\in V(A,o)} (p_y-\bar p)\Big| >\frac{1}{m} 
\Big)\\
&\leq 
2 (2A^\beta)^{m-1}   \ex^{-2 |V(A,o)|/m^2}= 2 (2A^\beta)^{m-1}   \ex^{-2 A^{m-1}/m^2}.
\ea
Hence, the probability in \eqref{e:prob} equals to 0.
\end{exa}

\begin{thm}
\label{thm1}
Let assumption \emph{\textbf{A}$_\text{SLLN}(\beta)$} holds true for $\beta>1$. Then for any initial value $Z(0)\in\mbZ^m$ the
weak convergence holds true:
\ba
(X_n,Y_n)\Rightarrow \frac{1}{\sqrt m} ( X^{\gamma}, Y^0 ),\quad n\to\infty,
\ea
where $X^\gamma$ and $Y^0$ are defined in \eqref{eq:Skew} and \eqref{eq:Y},
with $\gamma = 2\bar p-1$, and $c=0$.
\end{thm}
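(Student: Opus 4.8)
The plan is to run the excursion decomposition and martingale‑characterization scheme of the proof of Theorem~\ref{thm:periodic_membrane}, exploiting two simplifications and isolating one new ingredient. Since the particle leaves the membrane orthogonally there is no slide, so $Y(n)=\sum_{k=0}^{n-1}(Y(k+1)-Y(k))\1(X(k)\neq0)$ has no ``drift on $H$'' part and the argument of Proposition~\ref{p:Y} gives $(X_n,\nu_n,M^Y_n)\Rightarrow(X,\nu,\tfrac1{\sqrt m}W_Y)$ with $W_Y$ an $(m-1)$‑dimensional Brownian motion independent of $X$; hence $Y^0=W_Y$ and $c=0$. Moreover the membrane carries only two excursion types, so the relevant ``Walsh'' object is the planar one: with $X^+(n):=\max(X(n),0)$ and $X^-(n):=\max(-X(n),0)$, discrete Tanaka gives $X^\pm(n)=M^\pm(n)+L^\pm(n)$ with $M^\pm$ a martingale and $L^\pm(n)$ counting the positive/negative excursions started by time $n$, equivalently
\[
X(n)=X^+(n)-X^-(n)=M^X(n)+\gamma L(n)+2E(n),\qquad M^X:=M^+-M^-,
\]
where $L(n):=L^+(n)+L^-(n)=\#\{k\le n\colon X(k-1)=0\}$ is the number of membrane visits, $\gamma:=2\bar p-1$, $\tau_0<\tau_1<\cdots$ are the successive membrane visits, and the error term is
\[
E(n):=\sum_{k\le n,\ X(k-1)=0}\bigl(p_{Y(k-1)}-\bar p\bigr)=\sum_{j=0}^{L(n)-1}\bigl(p_{Y(\tau_j)}-\bar p\bigr).
\]

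Tightness of the rescaled families and continuity of all limit points follow exactly as in Lemmas~\ref{lem:refl}--\ref{lem:compactness}: the radial part $|X|$ is precisely the reflected chain $S$ of Lemma~\ref{lem:refl}, so $(|X([n\cdot])|,L([n\cdot]))/\sqrt n\Rightarrow(|B|,L^0)/\sqrt m$; in particular $L(nT)=O_\P(\sqrt n)$ and the limit of $|X_n|$ spends zero time at $0$. The martingale convergence and bracket computation are carried out as in Proposition~\ref{p:WBM}: one verifies conditions a)--d) of Theorem~\ref{Walsh} for the process $\sqrt m\,(X^+,X^-,M^+,M^-,\bar p\,\nu,(1-\bar p)\,\nu)$, the only non‑formal point being that $L^\pm(n)/L(n)\to p^\pm$ with $p^+=\bar p$, $p^-=1-\bar p$. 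Given this, Corollary~\ref{c:SBM} identifies the limit of $\sqrt m\,X_n=\sqrt m\,(X^+_n-X^-_n)$ as a skew Brownian motion with parameter $2\bar p-1$, so $X_n\Rightarrow\tfrac1{\sqrt m}X^\gamma$; combining with the slide‑free Proposition~\ref{p:Y} yields $(X_n,Y_n)\Rightarrow\tfrac1{\sqrt m}(X^\gamma,Y^0)$ with $\gamma=2\bar p-1$, $c=0$. The convergence $L^\pm(n)/L(n)\to p^\pm$ in turn reduces to two statements: \emph{(i)} a concentration estimate --- conditionally on the walk $Z$ and on the environment $\{p_y\}$, the excursion signs $\1(X(\tau_j+1)=1)$ are independent Bernoulli$(p_{Y(\tau_j)})$, so $L^+(n)$ deviates from $\sum_{j<L(n)}p_{Y(\tau_j)}$ by a conditionally centred amount of conditional variance $\le\tfrac14 L(n)=O_\P(\sqrt n)$; and \emph{(ii)} the spatial law of large numbers $E(n)/\sqrt n\to0$, uniformly on compact time intervals.

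Statement \emph{(ii)} is the heart of the proof and the only place the assumption $\beta>1$ enters. The key structural observation is that, in the absence of slides, the pair $(|X|,Y)$ --- hence the visit times $\tau_j$ and the visit locations $Y(\tau_j)$, which moreover form a mean‑zero random walk on $\mbZ^{m-1}$ with i.i.d.\ increments --- is a functional of the symmetric random walk alone and is \emph{independent} of the penetration probabilities $\{p_y\}$. Conditioning on $(|X|,Y)$ one has $E(n)=\sum_y\ell_y^{(n)}(p_y-\bar p)$, where $\ell_y^{(n)}:=\#\{j<L(n)\colon Y(\tau_j)=y\}$ is the occupation of the membrane‑visit walk, $\sum_y\ell_y^{(n)}=L(n)$, and with probability tending to $1$ the support of $\ell^{(n)}$ lies in the ball of radius $n^{1/2+\e}$ about the origin. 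In the i.i.d.\ environment this immediately finishes \emph{(ii)}: $\E[p_y\mid(|X|,Y)]=\bar p$, so the conditional mean of $E(n)$ vanishes and its conditional variance is at most $\tfrac14\sum_y(\ell_y^{(n)})^2\le\tfrac14(\max_y\ell_y^{(n)})\,L(n)=n^{o(1)}\cdot O_\P(\sqrt n)=o_\P(n)$, since a multidimensional random walk visits any single site only $n^{o(1)}$ times. In the periodic environment one instead solves the discrete Poisson equation $\mathcal{L}g=p-\bar p$ on the torus (solvable because $p-\bar p$ averages to zero over the period) and writes $E(n)=g(Y(\tau_{L(n)}))-g(Y(\tau_0))-(\text{martingale})$, which is $O_\P(\sqrt{L(n)})=o_\P(\sqrt n)$ because $g$ is bounded. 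For a general field satisfying \textbf{A}$_\text{SLLN}(\beta)$ one combines both ideas in a multi‑scale argument: the visited ball is split into dyadic shells $\{|y|\sim r\}$, $r\le n^{1/2+\e}$, and each shell is tiled by cubes of side $A\sim r^{1/\beta}$ --- admissible simultaneously with $A\to\infty$ and $A=o(r)$ precisely because $\beta>1$; on each such cube \textbf{A}$_\text{SLLN}(\beta)$ gives $|C|^{-1}\sum_{y\in C}(p_y-\bar p)\to0$, which controls the part of $E(n)$ obtained by freezing $\ell^{(n)}$ at its cube average, while the remaining ``gradient'' term is handled by the fact that the occupation measure of the membrane‑visit walk is macroscopically smooth at each scale together with a second‑moment bound on its fluctuations. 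Making this last estimate completely rigorous is the principal technical obstacle. Finally, uniformity in $t$ of $E(nt)/\sqrt n\to0$ is free, because $E(n\cdot)$ is a difference of two nondecreasing processes whose $\sqrt n$‑rescalings are $O_\P(1)$.
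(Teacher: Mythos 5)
Your overall skeleton matches the paper's: decompose $X$ into positive and negative excursions $X^\pm=M^\pm+L^\pm$, observe that the slide term is absent so $Y$ has no drift part, establish tightness and bracket convergence as in Propositions~\ref{p:WBM}--\ref{p:Y}, reduce everything to $L^+([nt])/L([nt])\to\bar p$ a.s., and split that ratio into a martingale fluctuation (your item (i), handled by the martingale SLLN exactly as in the paper) plus the spatial average $\frac1N\sum_{k<N}p_{Y(\tau_k)}$ (your item (ii)). You also correctly identify that $(|X|,Y)$, and hence the visit sequence $(\tau_k,Y(\tau_k))$, is independent of the environment, and that $\beta>1$ is what lets one take cube side $A\sim r^{1/\beta}=o(r)$ while still having $A\to\infty$.

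The genuine gap is in item (ii), and you already flag it yourself. The paper's resolution has two ingredients that your sketch does not supply. First, it proves that $Y(\tau_n)/n\stackrel{\di}{\to}S$, where $S$ is a $1$-stable (multidimensional Cauchy) vector, via the observation that $Y(\tau_n)$ is distributionally the exit value of the lateral coordinates of a symmetric walk started at height $n$, which after Donsker rescaling gives the hitting distribution of a Brownian motion on a hyperplane, i.e.\ the Cauchy law (Bass, Theorem~II.1.16). This point is not cosmetic: your phrase ``mean-zero random walk with i.i.d.\ increments'' is misleading because the increments $Y(\tau_k)-Y(\tau_{k-1})$ have \emph{no} finite mean (they are Cauchy-type), so the walk is ballistic at scale $n$ rather than diffusive at scale $\sqrt n$; you appear to be aware of the correct spatial range $n^{1/2+\e}$ after $L(n)\sim\sqrt n$ visits, but none of your shell/Poisson-equation machinery engages with the heavy tails, and the Poisson-equation trick is confined to the purely periodic subcase. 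Second, with the $1$-stable attraction established, the paper simply invokes Theorem~1.3 of Dolgopyat, Lenci and N\'andori \cite{dolgopyat2021global} — a strong law of large numbers for global observables of random walks in a stable domain of attraction — to get $\frac1n\sum_{k<n}p_{Y(\tau_k)}\to\bar p$ a.s.\ directly from $\textbf{A}_\text{SLLN}(\beta)$ with $\beta>1$. Your proposed multi-scale replacement ("freezing the occupation measure at cube averages" plus a "gradient" remainder controlled by second-moment smoothness of the local time) is plausible in spirit but, as you admit, not carried out; it would amount to reproving a nontrivial result that the paper cites. In short: right architecture and right reduction, but the heart of the argument — the ergodic theorem along the Cauchy-type embedded walk — is left open, and the $1$-stable identification that makes the cited theorem applicable is missing from your write-up.
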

\begin{proof}
1. Without loss of generality assume that $Z(0)=0$. As in Section \ref{s:pm} we 
decompose the Markov chain $Z=(X,Y)$ into the ``left'' and the ``right'' excursions. Since the membrane is one-sided, and there is no need of 
introducing the set $U$, the notation of Section \ref{s:pm} simplifies significantly. Similarly to \eqref{eq:X_j_notation} we define
\ba
X^{+}(n)&:=X(n)\cdot \1(X(n)>0),\\
X^{-}(n)&:=|X(n)|\cdot \1(X(n)<0),
\ea
so that
\ba
X(n)=X^+(n)-X^-(n),\quad n\geq 0.
\ea
Then we decompose these processes similarly to \eqref{eq:X_j_representation} as
\ba
X^{\pm}(n)&=\sum_{k=1}^{n} \Big(X^{\pm}(k)-X^{\pm}(k-1)\Big) \cdot \1(X^{\pm}(k-1)>0) + 
\sum_{k=1}^{n}\1(X^\pm(k-1)=0, X^\pm(k)=1)\\
&=: M^{ \pm}(n) + L^{ \pm}(n),\qquad
M^{ \pm}(0) = L^{ \pm}(0)=0.
\ea
The processes $M^\pm$ are martingales, and $L^\pm$ are non-decreasing processes.
Recall the processes $X_n(\cdot)$ and $Y_n(\cdot)$ defined in \eqref{e:XY}, and define additionally the scaled processes 
$X^{\pm}_n$, $M^{\pm}_n$, $\nu^{\pm}$ similarly to \eqref{e:scaled} (omitting the index $j$), so that
\ba
X_n(t)=X^+(t)-X^-(t),\quad M_n(t)=M^+(t)-M^-(t),\quad \nu_n(t):=\nu^{+}_n(t)+ \nu^{-}_n(t).
\ea
Analogously to reasoning of the previous section (Propositions \ref{p:WBM} and \ref{p:Y}), we
have that
the sequence 
\ba
\{(X^{+}_n, X^{-}_n, X_n, M^{+}_n, M^{-}_n, M_n,  \nu^{+}_n,\nu^-_n, \nu_n,Y_n)\}_{n\geq 0} 
\ea
is weakly relatively compact
in $D(\bR_+,\bR^{10})$
each its limit point $(X^{+}, X^{-}, X, M^{+}, M^{-}, M,  \nu^{+},\nu^-, \nu,Y) $ is
continuous, and
\ba
X(t)=X^+(t)- X^-(t), \quad M(t)=M^+(t)- M^-(t),\quad \nu(t)=\nu^+(t)+ \nu^-(t),
\ea
where $M^+$  and $M^-$ are local martingales with the brackets 
\ba
\langle M^{\pm}\rangle_t=\frac{1}{m}\int_0^t \1(X^\pm(s)>0)\,\di s.
\ea
Moreover, the process $\sqrt{m}|X|=\sqrt{m}(X^+ +X^-)$ is a standard reflected Brownian motion, 
$\sqrt{m} \nu$ is its local time at 0, and the process $\sqrt{m} Y$ is a standard $(m-1)$-dimensional Brownian motion independent of $X$.

Hence to prove Theorem \ref{thm1} it remains   to show that 
\ba
\label{e:conv}
\frac{\nu^+(t)}{\nu(t)}
=\bar p\quad \text{a.s.\ for all $t> 0$}.
\ea
To verify this, it suffices to prove that 
\ba
\label{e:conv1}
\lim_{n\to\infty} \frac{\nu^+_n(t)}{\nu_n(t)}=
 \lim_{n\to\infty} \frac{L^+([nt])}{L([nt])}
=\bar p\quad \text{a.s.\ for all $t> 0$}.
\ea
\medskip
\noindent 
2. Let $\tau_k$ be the moment of $k$th visit of $X$ to 0, $k\geq 0$, $\tau_0=0$, so that $Z(\tau_k)=(0,Y(\tau_k))$. 
By the strong Markov property we have
\ba
\Law(\tau_k-(\tau_{k-1}+1))=\Law(\tau|X(0)=1),\quad k\geq 1,
\ea
and 
\ba
\Law(Y(\tau_k)-Y(\tau_{k-1}))=\Law(Y(\tau)|X(0)=1,Y(0)=0),\quad k\geq 1,
\ea
where $\tau$ is the first return time to zero of $X$,
\ba
\tau=\inf\{k\geq 1\colon X(k)=0\}.
\ea
It is well known that  $\tau_k<\infty$ a.s.\ and $\tau_k\to+\infty$ a.s.\ as $k\to\infty$. 
Hence 
\ba
 Y(\tau_n)= \sum_{k=1}^{n}\Big(Y(\tau_k)-Y(\tau_{k-1})\Big),\ \  {n\geq 0},
\ea
 is a random walk on $\mbZ^{m-1}$
whose jumps have the probability distribution $\Law(Y(\tau)|X(0)=1,Y(0)=0)$.

We claim that
\ba
\label{e:stable}
\frac{Y(\tau_n)}{n} \stackrel{\di}{\to} S,\quad n\to\infty,
\ea
where $S$ is a $1$-stable random variable on $\bR^{m-1}$ with the characteristic function $\E \ex^{\i \langle u ,S\rangle}=\ex^{-|u|/\sqrt m}$, $u\in\bR^{m-1}$.
Indeed, for each $n\geq 1$ consider the symmetric random walk $\tilde Z=(\tilde X,\tilde Y)$ on $\mbZ^m$ starting at $\tilde Z(0)=(n,0,\dots,0)$. Denote 
\ba
\tilde \tau_0&=0,\\
\tilde \tau_1&=\inf\{k\geq 1\colon \tilde X(k)=n-1\},\\
&\cdots\\
\tilde \tau_n&=\inf\{k\geq 1\colon \tilde X(n)=0\}.
\ea
Then clearly
\ba
(\tau_1,\dots,\tau_n, Y(\tau_1),\dots,Y(\tau_n))\stackrel{\di}{=}(\tilde \tau_1,\dots,\tilde \tau_n, \tilde Y(\tilde \tau_1),\dots,\tilde Y(\tilde \tau_n))
\ea
By the functional central limit theorem,
\ba
\Law\Big( \sqrt m\frac{\tilde Z([n^2\, \cdot ])}{n}\Big| \tilde Z(0)=(n,0,\dots,0) \Big)\Rightarrow \Law \Big(W(\cdot)\Big|W(0)=(1,0\dots,0)\Big),
\ea
where $W$ is a standard $m$-dimensional Brownian motion, and thus
\ba
\Big(n^2 \tilde \tau_n, n^{-1}\sqrt m \tilde Y( \tilde \tau_n)\Big)\stackrel{\di}{\to} \Big(
\tau^W, \big(W_2(\tau^W),\dots,W_m(\tau^W)\big)\Big),
\ea
where $\tau^W=\inf\{t\geq 0\colon W_1(t)=0\}$. It is well known that $(W_2(\tau^W),\dots,W_m(\tau^W))$ is a 1-stable random vector,
see, e.g.,\ Theorem II.1.16 in Bass \cite{bass1995probabilistic}.

\medskip
\noindent 
3. With \eqref{e:stable} in hand, we apply Theorem 1.3 from Dolgopyat et al.\ \cite{dolgopyat2021global}, that states that 
under the assumption \textbf{A}$_\text{SLLN}(\beta)$ with $\beta>1$
\ba
\frac{1}{n}\sum_{k=0}^{n-1} p_{Y(\tau_k)}\to \bar p\ \  \text{a.s.},\quad n\to\infty.  
\ea

\medskip
\noindent 
4. Now we are able to finish the proof of \eqref{e:conv}
Then
\ba
L^{+}(n)&=\sum_{k=1}^{n}\1(X(k-1)=0, X(k)=1)=\sum_{k\colon \tau_k\leq n-1} \1(X(\tau_k+1)=1),\\
L(n)&=\sum_{k=1}^{n}\1(X(k-1)=0)=\max\{k\geq 1\colon \tau_k\leq n-1\}.
\ea
Since $\tau_k\to+\infty$ a.s., $L(n)\to\infty$ a.s.\ as well as at least one of the processes $L^+(\cdot)$ and  $L^-(\cdot)$.

Consider the process
\ba
Q^+(n):=\sum_{k=0}^{n-1} \1(X(\tau_k+1)=1),\\
\ea
and note that
\ba
\lim_{n\to\infty}\frac{L^{+}(n)}{L(n)}=\lim_{n\to\infty}\frac{Q^{+}(n)}{n}
&=\lim_{n\to\infty}\frac{1}{n}\sum_{k=0}^{n-1} \Big(\1(X(\tau_k+1)=1) - p_{Y(\tau_k)}\Big) +  
\lim_{n\to\infty}\frac{1}{n}\sum_{k=0}^{n-1} p_{Y(\tau_k)}\\
&=\lim_{n\to\infty}\frac{1}{n}\sum_{k=0}^{n-1} \Big(\1(X(\tau_k+1)=1) - p_{Y(\tau_k)}\Big)+ \bar p.
\ea
The process
\ba
V(n):=\sum_{k=0}^{n-1} \Big(\1(X(\tau_k+1)=1) - p_{Y(\tau_k)}\Big) 
\ea
is a martingale difference with
\ba
\E\Big[\1(X(\tau_k+1)=1)-p_{Y(\tau_k)}\Big]=0 \quad \text{and}\quad  \E\Big[\1(X(\tau_k+1)=1)-p_{Y(\tau_k)}\Big]^2\leq 2.
\ea
Hence by the strong law of large numbers for martingales (Theorem 8b in Chapter II, \S 3 of  Gikhman and Skorokhod \cite{GikSko-74}) we have convergence
\ba
\frac{V(n)}{n}\to 0 \ \ \text{a.s.}, \quad  n\to\infty,
\ea
what finishes the proof of Theorem \ref{thm1}.
\end{proof}

%
%
%

\end{document}